\definecolor{ForestGreen}{rgb}{0.1,0.6,0.05}
\definecolor{EgyptBlue}{rgb}{0.063,0.1,0.6}
\newtheorem{theorem}{Theorem}[section]
\newtheorem{proposition}{Proposition}[section]
\newtheorem{definition}{Definition}[section]
\newtheorem{lemma}{Lemma}[section]
\newtheorem{remark}{Remark}[section]
\newtheorem{cor}{Corollary}[section]
\newtheorem{conjecture}{Conjecture}[section]
\numberwithin{equation}{section}
\numberwithin{theorem}{section}
\numberwithin{equation}{section}
\numberwithin{theorem}{section}
\definecolor{ForestGreen}{rgb}{0.1,0.6,0.05}
\definecolor{EgyptBlue}{rgb}{0.063,0.1,0.6}
\subjclass[2010]{Primary  58J50; Secondary 35P15}
\title [Steklov eigenvalues on Symmetric domains]{Bounds for higher Steklov and mixed Steklov Neumann eigenvalues on domains with holes}
\author{Sagar Basak$^1$ \and Sheela Verma$^*$}
\address{$1$  Department of Mathematical Sciences, Indian Institute of Technology (BHU), Varanasi, India}
\email{sagarbasak.rs21@iitbhu.ac.in}
\address{$*$  Corresponding author, Department of Mathematical Sciences, Indian Institute of Technology (BHU), Varanasi, India}
\email{sheela.mat@iitbhu.ac.in}
\keywords{Steklov eigenvalues, Steklov Neumann Eigenvalues, Doubly connected domains, Symmetries}
\begin{document}

\begin{abstract}
In this article, we study Steklov eigenvalues and mixed Steklov Neumann eigenvalues on a smooth bounded domain in $\mathbb{R}^{n}$, $n  \geq 2$, having a spherical hole. We focus on two main results related to Steklov eigenvalues. First, we obtain explicit expression for the second nonzero Steklov eigenvalue on concentric annular domain. Secondly, we derive a sharp upper bound of the first $n$ nonzero Steklov eigenvalues on a domain $\Omega \subset \mathbb{R}^{n}$ having symmetry of order $4$ and a ball removed from its center. This bound is given in terms of the corresponding Steklov eigenvalues on a concentric annular domain of the same volume as $\Omega$. Next, we consider the mixed Steklov Neumann eigenvalue problem on $4^{\text{th}}$ order symmetric domains in $\mathbb{R}^{n}$ having a spherical hole and obtain upper bound of the first $n$ nonzero eigenvalues. We also provide some examples to illustrate that symmetry assumption in our results is crucial. Finally, We make some numerical observations about these eigenvalues using FreeFEM++ and state them as conjectures.
\end{abstract}

\maketitle

\section{introduction} 
Among all domains with certain geometric constraints, finding a domain which optimizes eigenvalues of a differential operator is a classical problem in Spectral Geometry and it is referred as shape optimization problem. In this work, we are interested in shape optimization problem for eigenvalues of the Dirichlet to Neumann operator
\begin{align*}
    \mathcal{D} : L^{2}(\mathcal{C}_1) & \longrightarrow  L^{2}(\mathcal{C}_1), \\
    f & \longmapsto \frac{\partial \tilde{f}}{\partial \nu}.
\end{align*}
Here $\Omega$ is a bounded connected domain having Lipschitz boundary $\partial \Omega = \mathcal{C}_1 \sqcup \mathcal{C}_2$ and $\nu$ is unit outward normal to $\partial \Omega$. The function $\tilde{f}$ represents the Harmonic extension of $f$ to $\Omega$ satisfying certain conditions on $\mathcal{C}_2$.

If $\mathcal{C}_2 = \emptyset$ and function $\tilde{f}$ is the Harmonic extension of $f$ to $\Omega$, then eigenvalues of operator $\mathcal{D}$ are same as eigenvalues of the following Steklov problem
\begin{align} \label{eqn: SP}
    \begin{cases}
       \Delta u =0 \quad in \,\, \Omega, \\
        \frac{\partial u}{\partial \nu} = \sigma u \quad on \,\, \partial \Omega (= \mathcal{C}_1).
    \end{cases}
\end{align}
 Eigenvalues of Problem \eqref{eqn: SP} are positive and form an increasing sequence $0 = \sigma_{0}(\Omega) < \sigma_{1}(\Omega) \leq \sigma_{2}(\Omega) \leq \cdots \nearrow \infty.$

If function $\tilde{f}$ is the Harmonic extension of $f$ to $\Omega$ satisfying Neumann boundary condition on $\mathcal{C}_2$, then eigenvalues of operator $\mathcal{D}$ are same as eigenvalues of the following mixed Steklov Neumann problem
\begin{align} \label{eqn: SNP}
    \begin{cases}
       \Delta u =0 \quad in \,\, \Omega, \\
        \frac{\partial u}{\partial \nu} = \mu u \quad on \,\, \mathcal{C}_1 \\
        \frac{\partial u}{\partial \nu} = 0 \quad on \,\, \mathcal{C}_2.
    \end{cases}
\end{align}
Eigenvalues of Problem \eqref{eqn: SNP} are positive and form an increasing sequence $0 = \mu_{0}(\Omega) < \mu_{1}(\Omega) \leq \mu_{2}(\Omega) \leq \cdots \nearrow \infty.$

In literature, various sharp bounds have been proved for Steklov eigenvalues but very few results are available related to the mixed Steklov Neumann eigenvalues. Now, We state some results related to our study on Steklov and mixed Steklov Neumann eigenvalues.

Weinstock \cite{weinstock1954inequalities} obtained first isoperimetric sharp bound for the first nonzero Steklov eigenvalue and proved that among all simply connected planar domains of fixed perimeter, disk maximizes the first nonzero Steklov eigenvalue. This result can not be generalised to higher dimensions for around 60 years. Recently in \cite{bucur2021weinstock}, this result is generalized to convex domains contained in $\mathbb{R}^{n}, n \geq 2$ by Bucur et. al. The authors proved that among all bounded convex set $\Omega \subset \mathbb{R}^{n}$ of fixed perimeter $L$, ball maximizes the first nonzero Steklov eigenvalue i.e.,
\begin{align} \label{Bucur_convex}
    \sigma_1 (\Omega) \leq \sigma_1 (B),
\end{align}
where $B$ is a ball in $\mathbb{R}^{n}$ with perimeter $L$.

For general simply connected domains in higher dimensions, Fraser and Schoen \cite{fraser2019shape} proved that this result is not true by showing the existence of a smooth contractible domain having same perimeter as of the unit ball but having larger first nonzero Steklov eigenvalue. A quantitative version of inequality \eqref{Bucur_convex} has been proved in \cite{gavitone2020quantitative}. The theory of extremal metrics on a surface have been developed for Steklov eigenvalues in \cite{fraser2011first, fraser2013minimal, fraser2016sharp}. For stability results related to first nonzero Steklov eigenvalue on Planar domains, see \cite{bucur2021stability, girouard2017spectral}.

Note that all the above isoperimetric bounds are proved for bounded smooth domains under perimeter constraints. Several other estimates have also been obtained for Steklov eigenvalues on bounded smooth domains under volume constraints. In \cite{brock2001isoperimetric}, F. Brock proved that among all bounded domains in $\mathbb{R}^{n}$ with Lipschitz boundary and fix volume, ball minimizes harmonic mean of the first $n$- nonzero Steklov eigenvalues. This result has been generalised for the harmonic mean of the first $(n-1)$ nonzero Steklov eigenvalues on bounded domains contained in Hyperbolic space \cite{verma2021isoperimetric} and in curved spaces \cite{chen2024upper}. Similar result has also been proved for the first nonzero Steklov eigenvalue on bounded domains contained in noncompact rank-$1$ symmetric spaces in \cite{BinoySanthanam2024}.


The problem of finding bounds for the first Laplace eigenvalue on doubly connected domains with different boundary conditions is an interesting problem and many results for planar domains have been proved in this direction in $19^{\text{th}}$ century. To the best of our knowledge, first such result was due to Makai \cite{makai1959principal}, where upper bound for the first Dirichlet eigenvalue on ring shaped planar domain was obtained. For the mixed Dirichlet Neumann problem, first such result was proved by Payne and Weinberger \cite{payne1961some}, where authors consider a bounded domain having several holes (multi connected domain) with Dirichlet condition on the outer boundary and Neumann condition on other boundaries. Among such domains of given area and given perimeter of outer boundary, it was proved in \cite{payne1961some} that first Laplace eigenvalue is maximum for concentric annular domain. In the last decades, this problem has been studied by many researchers for various boundary conditions like the Mixed Dirichlet Neumann problem \cite{anoop2020reverse, anoop2021shape}, the Mixed Steklov Dirichlet problem \cite{basak2023sharp, gavitone2023isoperimetric,verma2020eigenvalue}. For results related to Dirichlet boundary condition, see \cite{anisa2005two, chorwadwala2013two, el2008extremal}. In \cite{ftouhi2022place}, Steklov eigenvalue problem on an annular domain in $\mathbb{R}^{n}$ i.e., a ball in $\mathbb{R}^{n}$ having circular obstacle, was considered. The author proved that among all position of inner ball, the first nonzero Steklov eigenvalue is maximized if the outer and inner ball have the same center i.e., the first nonzero Steklov eigenvalue is maximum on the concentric annular domain.

Sharp bounds for higher eigenvalues of Laplacian with various boundary conditions have also been obtained on doubly connected domains under certain symmetry restrictions. In \cite{anoop2022szego1}, authors proved that among all multi-connected domains in $\mathbb{R}^{n}$ satisfying certain symmetry conditions, concentric annular domain maximizes the first nonzero Neumann eigenvalue. This result was later extended to domain contained in simply connected space forms, see \cite{anoop2022szego}. Recently similar bounds have been proved for Robin-Neumann boundary condition \cite{anoop2023reverse} and for the mixed Steklov Dirichlet condition \cite{basak2023sharp}. 

Now we present results related to mixed Steklov Neumann eigenvalues. Certain inequalities between the mixed Steklov-Dirichlet and the Steklov-Neumann eigenvalues of the Laplacian have been obtained in \cite{banuelos2010eigenvalue}. In \cite{hassannezhad2020eigenvalue}, bounds on the Riesz means of the mixed Steklov-Neumann and Steklov-Dirichlet eigenvalue problem have been studied on a bounded domain in $\mathbb{R}^{n}$. Considering link between mixed Steklov Neumann eigenvalues and mixed Steklov Dirichlet eigenvalues, sharp isoperimetric bound for these eigenvalues is obtained in \cite{arias2024applications}. Authors also provided full asymptotics for these mixed Steklov problems on arbitrary surfaces. To best of our knowledge, higher Steklov eigenvalues and mixed Steklov Neumann eigenvalues are not explored much. With this motivation, in this article, we are interested in isoperimetric bounds of higher Steklov eigenvalues and higher Steklov Neumann eigenvalues on doubly connected domains.

The rest part of this article is organized as follows: In Section \ref{sec:annular domain}, we study eigenvalues and eigenfunctions of Steklov eigenvalue problem on concentric annular domain and find explicit expression for the second nonzero Steklov eigenvalue counted without multiplicity (Theorem \ref{thm:increasing}). Some auxiliary results about the behavior of Steklov eigenfunctions on annular domain have been proved in Section \ref{sec:int. ineq.}. Then in Section \ref{sec: iso. bound}, we prove that concentric annular domain is the optimal domain for the first $n$ nonzero Steklov eigenvalues among all multi connected domains in $\mathbb{R}^{n}$ having certain symmetry restrictions (Theorem \ref{thm:isoperimetric}). The mixed Steklov Neumann problem have been studied in Section \ref{sec:SN problem} and similar bounds as in Theorem \ref{thm:isoperimetric} have been obtained for the mixed Steklov Neumann eigenvalues (Theorem \ref{thm:isoperimetricSN}). In Section \ref{Counter examples}, we provide examples of some domains to show that the symmetry condition assumed in Theorem \ref{thm:isoperimetric} and \ref{thm:isoperimetricSN} can not be dropped. In last Section \ref{sec:numerical observation}, we observed interesting monotonicity behaviour of the first two nonzero Steklov eigenvalues and mixed Steklov Neumann eigenvalues on certain domains. For example, on any annular domain (ball with a spherical hole) in $\mathbb{R}^{2}$, the first mixed Steklov Neumann eigenvalue have multiplicity 2. Further, it decreases as the distance between centers of both ball increases.

\section{Steklov eigenvalue problem on concentric annular domain} 
\label{sec:annular domain}
Let $B_L$ and $B_1$ be balls in $\mathbb{R}^n$ of radius $L (> 1)$ and $1$, respectively centered at the origin. Consider the Steklov eigenvalue problem on annular domain $\Omega_{0} = B_L \setminus \bar{B_1}$,
\begin{align}
    \begin{cases}
        \Delta u =0 \quad in \,\, \Omega_0, \\
        \frac{\partial u}{\partial \nu} = \sigma u \quad on \,\, \partial \Omega_0.
    \end{cases} \label{stk on ball-ball}
\end{align}
In this section, we first calculate all eigenvalues and eigenfunctions of \eqref{stk on ball-ball} and then among them we find the second nonzero Steklov eigenvalue on annular domain counted without multiplicity (Theorem \ref{thm:increasing}). 

\subsection{Preliminaries}
Due to symmetry of annular domain $\Omega_{0}$, any eigenfunction of \eqref{stk on ball-ball} will be of the form $u(r,\omega)=f(r)g(\omega)$, where $f(r)$ is a radial function defined on $[1, L]$ and $g(\omega)$ is an eigenfunction of $\Delta_{S^{n-1}}$. Recall that the spectrum of $\Delta_{S^{n-1}}$ is $\{l(l+n-2), l \in \mathbb{N} \cup \{0\}\}$. The multiplicity of eigenvalue $l(l+n-2)$ is same as the dimension of the set of all homogeneous harmonic polynomials $H_l$ in $n$ variables and of degree $l \in \mathbb{N} \cup \{0\}$. It is well known that 
\begin{align*}
    \text{dim } H_{l} = \frac{2l+n-2}{l+n-2} {l+n-2 \choose {l}}.
\end{align*}
Eigenfunctions corresponding to eigenvalue $l(l+n-2)$ are precisely the set of all spherical harmonics of degree $l$.  The set of spherical harmonics are defined as the restriction to $\mathbb{S}^{n-1}$  of homogeneous harmonic polynomials in $n$ variables. For more details about the spectrum and eigenfunctions of $\Delta_{S^{n-1}}$, see \cite{anoop2022szego}.
The Laplacian of $u(r,\omega)$ takes the form,
\begin{align*}
     \Delta u(r, \omega) &= g(\omega)\left(-f''(r)- \frac{n-1}{r} f'(r) \right) + \frac{f(r)}{r^2} \Delta_{S ^{n-1}} g(\omega)\\
     &= g(\omega) \left(-f''(r)- \frac{n-1}{r} f'(r) + \frac{f(r)}{r^2} l(l+n-2) \right).
\end{align*}
Since $u(r,\omega)$ is a solution of (\ref{stk on ball-ball}), function $f(r)$ satisfies 
\begin{eqnarray} 
    \begin{array}{ll} \label{ode}
        -f''(r)-\frac{n-1}{r}f'(r)+ \frac{l(l+n-2)}{r^2}f(r)=0 ~\mbox{ for } ~ r \in (1,L), \\
    f'(1)=-\sigma f(1), \,\, f'(L)=\sigma f(L).
    \end{array}
\end{eqnarray}
Solving this ordinary differential equation for function $f(r)$, we get 
\begin{equation*}
    f(r)= \begin{cases}
         C_1 \ln r + C_2  &\quad l=0, n=2, \\
         C_1 r^{l}+C_2 \frac{1}{r^{l+n-2}}  &\quad \text{ otherwise},
    \end{cases}
\end{equation*}
for some constants $C_1$ and $C_2$. To calculate values of $C_1$ and $C_2$, we use the boundary condition of (\ref{ode}) and get a system of two linear equations. 

\textbf{Case I.} For $l=0, n=2$ 
\begin{equation} \label{eqn:linear system}
   \begin{cases}
       C_1 = -C_2 \sigma, \\
      C_1\frac{1}{L}=\sigma (C_1ln(L)+C_2).
   \end{cases} 
\end{equation}
This system has non trivial solution if $\sigma^2 L\,\, \ln L-\sigma (L+1)=0$. This equation have two solutions say, $\sigma_{0, 1} = 0$ and $\sigma_{0, 2} = \frac{1+L}{L \, \ln L}$. Thus $\sigma_{0, 1}$ and $\sigma_{0, 2}$ are eigenvalues of \eqref{ode} corresponding to eigenfunctions  $f_{0, 1}$ and $f_{0, 2}$, respectively, where $ f_{0,i}(r) = 1-\sigma_{0, i} \,ln(r), i = 1,2$ and $r \in [1,L]$.

\textbf{Case II.} For the remaining values of $l$ and $n$, 
\begin{equation}
    \begin{cases}
        C_1(l+\sigma)-C_2(l+n-2-\sigma)=0,\\
        C_1(lL^{l-1}-L^l\sigma)-C_2(\frac{l+n-2}{L^{l+n-1}}+\frac{1}{L^{l+n-2}}\sigma)=0.
    \end{cases}
\end{equation}
This system has nontrivial solution if $ \tilde{A} \sigma^2 + \tilde{B} \sigma + \tilde{C} = 0$, Where 
\begin{align*}
     \begin{cases}
         \tilde{A}=L(L^{2l+n-2}-1) \\
         \tilde{B}=-\{ lL^{2l+n-2}+L^{2l+n-1}(l+n-2)+lL+l+n-2 \} \\
         \tilde{C} = l(l+n-2)(L^{2l+n-2}-1).
     \end{cases}
 \end{align*}
We compute the discriminant $\mathcal{D}$ of the quadratic equation and verify that $\mathcal{D} > 0$. Note that 
 \begin{align*}
    \mathcal{D}&=\tilde{B}^2 -4\tilde{A} \tilde{C}\\
              &= \{(l+n-2)L^{2l+n-1}+lL^{2l+n-2}+lL+l+n-2\}^2-4(l+n-2)lL(L^{2l+n-2}-1)^2 \\ 
              & \geq \{(l+n-2)L^{2l+n-1}+lL^{2l+n-2}\}^2-4(l+n-2)lL(L^{2l+n-2}-1)^2 \\
              &\geq (L^{2l+n-2}-1)^2\{(l+n-2)L+l\}^2-4(l+n-2)lL(L^{2l+n-2}-1)^2 \\
              & =(L^{2l+n-2}-1)^2 \{(l+n-2)L-l\}^2 > 0.
\end{align*}
Thus equation $ \tilde{A} \sigma^2 + \tilde{B} \sigma + \tilde{C} = 0$ have two different real solutions, say 
\begin{align*}
\sigma_{l, 1} = \frac{-\tilde{B}-\sqrt{\mathcal{D}}}{2\tilde{A}} < \frac{-\tilde{B}+\sqrt{\mathcal{D}}}{2\tilde{A}} = \sigma_{l, 2}.
\end{align*}
The eigenfunction of \eqref{ode} corresponding to $\sigma_{l, i}, i=1, 2$ is 
\begin{align} \label{eqn: eigenfunction ODE}
    f_{l,i}(r)= r^l + \frac{(l+\sigma_{l,i})}{l+n-2-\sigma_{l,i}} \frac{1}{r^{l+n-2}}, \quad r \in [1,L].
\end{align}

\begin{remark} \label{rmk:explicit expression}
Note that eigenfunctions of \eqref{stk on ball-ball} are of the form $f_{l,i}(r) g(\omega)$ corresponding to the eigenvalues $\sigma_{l,i}$,\,\, $l\geq 0,$ $i= 1,2$, and $g(\omega)$ is an eigenfunction of $\Delta_{\mathbb{S}^{n-1}}$ corresponding to eigenvalue $l(l+n-2)$. The following explicit expressions for $\sigma_{l,i}$ will be used in the next subsection to prove Theorem \ref{thm:increasing}:

\begin{enumerate}
\item $\sigma_{0, 1} = 0.$ 
\item Value of $ \sigma_{0, 2}$ depends on the value of $n$,
\begin{align*}
    \sigma_{0, 2} = 
\begin{cases}
\frac{1+L}{L \, \ln L},  & \quad \ n=2 \\
\frac{(n-2)(1+L^{n-1})}{(L^{n-1}-L)}, & \quad \ n>2.\\
\end{cases}
\end{align*}
\item $\sigma_{1,2} = \frac{(L+n-1)+L^n(1+L(n-1))+\sqrt{\left((L+n-1)+L^n(1+L(n-1))\right)^2-4L(L^n-1)^2(n-1)}}{2L(L^{n}-1)}.$
\item $\sigma_{2,1} = \frac{L^{n+2}(2+Ln)+(n+2L)-\sqrt{\left( L^{n+2}(2+Ln)+(n+2L) \right)^2-8Ln(L^{n+2}-1)^2}}{2L(L^{n+2}-1)}.$
\end{enumerate}
\end{remark}

\subsection{Second nonzero Steklov eigenvalue counted without multiplicity}

It was proved in \cite{ftouhi2022place} that the first nonzero Steklov eigenvalue on annular domain $\sigma_{1}(\Omega_0) = \sigma_{1,1}$ is of multiplicity $n$ i.e., $\sigma_{1}(\Omega_0) = \sigma_{2}(\Omega_0)= \cdots = \sigma_{n}(\Omega_0) = \sigma_{1,1}$. In this subsection, we find the next smallest Steklov eigenvalue $\sigma_{n+1}(\Omega_0)$ on annular domain and provide it's explicit expression. Precisely, we prove the following theorem:
\begin{theorem} \label{thm:increasing}
The second nonzero Steklov eigenvalue, counted without multiplicity, on concentric annular domain $\Omega_0$ is
\begin{align*}
\sigma_{n+1}(\Omega_0) = \sigma_{2,1} = \frac{L^{n+2}(2+Ln)+(n+2L)-\sqrt{\left( L^{n+2}(2+Ln)+(n+2L) \right)^2-8Ln(L^{n+2}-1)^2}}{2L(L^{n+2}-1)} 
\end{align*}
with multiplicity $\frac{(n+2)(n-1)}{2}.$
\end{theorem}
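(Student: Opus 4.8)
The plan is to reduce the theorem to an ordering statement among the numbers $\sigma_{l,i}$ and then to establish that ordering by a monotonicity argument together with one explicit comparison. By the separation of variables carried out above, the Steklov spectrum of $\Omega_0$ is exactly $\{\sigma_{l,i} : l \geq 0,\ i \in \{1,2\}\}$, where $\sigma_{l,i}$ occurs with the multiplicity $\dim H_l = \frac{2l+n-2}{l+n-2}\binom{l+n-2}{l}$ of the degree-$l$ spherical harmonics. Since it is already known that $\sigma_1(\Omega_0) = \cdots = \sigma_n(\Omega_0) = \sigma_{1,1}$ with $\dim H_1 = n$, the number $\sigma_{n+1}(\Omega_0)$ is the smallest element of the spectrum strictly exceeding $\sigma_{1,1}$. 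Thus it suffices to prove that $\sigma_{2,1}$ is this smallest value and that no index $(l,i) \neq (2,1)$ produces the same number; the asserted multiplicity $\frac{(n+2)(n-1)}{2}$ is then recovered as $\dim H_2$.

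First I would set up a monotonicity statement in the angular parameter. Fixing $\lambda = l(l+n-2)$, the two numbers $\sigma_{l,1} < \sigma_{l,2}$ are the eigenvalues of the one-dimensional problem \eqref{ode}; equivalently, after replacing a radial function by the solution of that ODE with prescribed boundary values, they are the eigenvalues of the $2 \times 2$ Dirichlet-to-Neumann matrix attached to the quadratic form
\[
a_\lambda(f) = \int_1^L \br{(f'(r))^2 + \frac{\lambda}{r^2} f(r)^2}\, r^{n-1}\, \dr
\]
relative to the boundary mass $f(1)^2 + L^{n-1} f(L)^2$. Since $a_\lambda$ is strictly increasing in $\lambda$ for every $f \not\equiv 0$, this Dirichlet-to-Neumann matrix is strictly increasing as a quadratic form, and hence both of its eigenvalues are strictly increasing in $\lambda$, and therefore in $l$. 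This immediately yields $\sigma_{1,1} < \sigma_{2,1} < \sigma_{3,1} < \cdots$ along the lower branch and $\sigma_{0,2} < \sigma_{1,2} < \sigma_{2,2} < \cdots$ along the upper branch, while $\sigma_{l,1} < \sigma_{l,2}$ holds for every $l$ because $\mathcal{D} > 0$. Consequently every eigenvalue strictly larger than $\sigma_{1,1}$ is at least $\min\{\sigma_{2,1}, \sigma_{0,2}\}$.

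It then remains to compare $\sigma_{2,1}$ against the bottom of the second branch. By the branch monotonicity this amounts to the single inequality $\sigma_{2,1} < \sigma_{0,2}$, although one may equally verify $\sigma_{2,1} < \sigma_{1,2}$ directly from the closed forms recorded in Remark \ref{rmk:explicit expression}. Once $\sigma_{2,1}$ is known to be strictly smaller than every other eigenvalue exceeding $\sigma_{1,1}$, it cannot coincide with any $\sigma_{l,i}$ other than the one of index $(2,1)$, so its multiplicity as a Steklov eigenvalue of $\Omega_0$ equals exactly $\dim H_2 = \frac{(n+2)(n-1)}{2}$, which is the claim. To prove $\sigma_{2,1} < \sigma_{0,2}$ I would substitute the explicit expressions of Remark \ref{rmk:explicit expression} (splitting into $n = 2$ and $n > 2$, as the formula for $\sigma_{0,2}$ does), move the radical appearing in $\sigma_{2,1}$ to one side, and square to reduce the statement to a polynomial inequality in $L$ with parameter $n$.

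The main obstacle I anticipate is precisely this last algebraic step: after clearing denominators and the square root, one must show that a polynomial in $L$ with $n$-dependent coefficients is positive for all $L > 1$ and all integers $n \geq 2$, and organizing it into a manifestly positive form — for instance a sum of terms each vanishing only in the degenerate limit $L \to 1$ — is the delicate part. A secondary technical point is to justify that the reduced problem \eqref{ode} has exactly the two eigenvalues $\sigma_{l,1}, \sigma_{l,2}$ and that the Dirichlet-to-Neumann monotonicity applies verbatim in the exceptional case $l = 0$, $n = 2$, where the radial solutions involve $\ln r$ rather than powers of $r$.
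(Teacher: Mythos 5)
Your outline is correct and reaches the theorem by a route that overlaps with the paper's but is more economical in one key place. The paper's proof combines the lower-branch monotonicity $\sigma_{0,1}<\sigma_{1,1}<\sigma_{2,1}<\cdots$ (quoted from Lemma \ref{lem:Illias}) with \emph{two} explicit algebraic comparisons, $\sigma_{2,1}\leq\sigma_{0,2}$ and $\sigma_{2,1}\leq\sigma_{1,2}$ (Lemmas \ref{lem: eigenvalue ineq. for planar}, \ref{lem: inequality1for higher order}, \ref{lem: inequality2for higher order}), the second of which is a fairly heavy polynomial estimate for $n\geq 3$. Your observation that the $2\times 2$ Dirichlet-to-Neumann form
\[
Q_\lambda(v)=\min\Big\{\int_1^L\Big((f')^2+\tfrac{\lambda}{r^2}f^2\Big)r^{n-1}\,dr:\ f(1)=v_1,\ f(L)=v_2\Big\}
\]
is strictly increasing in $\lambda$ on $v\neq 0$, so that \emph{both} generalized eigenvalues (relative to $\mathrm{diag}(1,L^{n-1})$) increase strictly in $l$, is sound: the minimum of a strictly increasing family of functionals over a fixed boundary-value class is strictly increasing, and Courant--Fischer transfers this to both eigenvalues of the pencil. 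This gives $\sigma_{0,2}<\sigma_{1,2}<\sigma_{2,2}<\cdots$ for free and collapses the paper's two comparisons into the single inequality $\sigma_{2,1}<\sigma_{0,2}$; it also uniformly covers the exceptional case $l=0$, $n=2$, since the variational definition of $Q_\lambda$ does not care about the closed form of the minimizer. The strictness you insist on is genuinely needed for the multiplicity count $\dim H_2=\frac{(n+2)(n-1)}{2}$, and is in fact slightly cleaner than the paper's statements, which are phrased with non-strict inequalities.

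The one substantive item you have not discharged is the inequality $\sigma_{2,1}<\sigma_{0,2}$ itself. You correctly identify the reduction (isolate the radical, square, clear denominators, prove positivity of a polynomial in $L$ for all $L>1$ and all $n\geq 2$), but this is precisely where the real work of the paper's proof lives: for $n=2$ it is Lemma \ref{lem: eigenvalue ineq. for planar}(2), and for $n\geq 3$ it is Lemma \ref{lem: inequality1for higher order}, which first bounds the radical via $(2+Ln)\geq n+2L$ to get
\[
\sqrt{\big(L^{n+2}(2+Ln)+(n+2L)\big)^2-8Ln(L^{n+2}-1)^2}\ \geq\ (L^{n+2}-1)(2L-n),
\]
and then reduces the claim to the positivity of the degree-$(2n+1)$ polynomial $h$ of Lemma \ref{positive lemma}, proved by checking $h^{(k)}(1)\geq 0$ for $k\leq 7$ and $h^{(8)}\geq 0$. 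Your proposal anticipates the difficulty but does not exhibit the manifestly positive decomposition, so as written the argument is a correct and complete \emph{plan} resting on one unproved (but true, and provable by the paper's method) elementary inequality.
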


To prove Theorem \ref{thm:increasing}, we need the following lemmas.

    \begin{lemma} \label{lem: eigenvalue ineq. for planar}
        Let $L, \sigma_{0,2}, \sigma_{1,2}$ and $\sigma_{2,1}$ be defined as the above. Then, for $n=2$,
    \begin{enumerate}
        \item $ \sigma_{0,2} \leq \sigma_{1,2}.$
        \item $ \sigma_{2,1} \leq \sigma_{0,2}.$
    \end{enumerate}
            
    \end{lemma}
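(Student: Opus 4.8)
The plan is to treat each inequality as a comparison between $\sigma_{0,2}=\frac{1+L}{L\ln L}$ and one root of the quadratic whose roots are $\sigma_{l,1}<\sigma_{l,2}$. Writing $P_l(\sigma)=\tilde{A}_l\sigma^2+\tilde{B}_l\sigma+\tilde{C}_l$, which for $n=2$ has coefficients $\tilde{A}_l=L(L^{2l}-1)$, $\tilde{B}_l=-l(L+1)(L^{2l}+1)$, $\tilde{C}_l=l^2(L^{2l}-1)$, one notes that $\tilde{A}_l>0$, so $P_l$ is an upward parabola and $P_l(x)<0$ holds \emph{exactly} for $x$ strictly between its two roots. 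My strategy for both parts is therefore: (i) use a sharp one-sided bound on $\ln L$ to compare $\sigma_{0,2}$ with an explicit rational ``intermediate'' value, and (ii) evaluate $P_l$ at that intermediate value and exhibit a definite sign, which locates the intermediate relative to $\sigma_{l,1},\sigma_{l,2}$.

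For part (1) I would take the intermediate $\sigma^{\mathrm{hi}}=\frac{(1+L)^2}{2L(L-1)}$. First, the elementary inequality $\ln L\ge\frac{2(L-1)}{L+1}$ for $L>1$ (whose difference has derivative $\frac{(L-1)^2}{L(L+1)^2}>0$ and vanishes at $L=1$) gives at once $\sigma_{0,2}=\frac{1+L}{L\ln L}\le\frac{(1+L)^2}{2L(L-1)}=\sigma^{\mathrm{hi}}$. Next I would compute $P_1(\sigma^{\mathrm{hi}})$; I expect it to collapse, after factoring, to $-\frac{(L-1)^3(L+1)}{4L}$, which is negative for $L>1$. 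Hence $\sigma^{\mathrm{hi}}$ lies strictly between $\sigma_{1,1}$ and $\sigma_{1,2}$, so $\sigma^{\mathrm{hi}}<\sigma_{1,2}$, and chaining yields $\sigma_{0,2}\le\sigma^{\mathrm{hi}}<\sigma_{1,2}$.

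For part (2) I would take the intermediate $\frac{2}{L-1}$. The companion elementary inequality $\ln L\le\frac{L^2-1}{2L}$ for $L>1$ (the difference $\frac12(L-\frac1L)-\ln L$ has derivative $\frac{(L-1)^2}{2L^2}>0$ and vanishes at $L=1$) gives $\sigma_{0,2}=\frac{1+L}{L\ln L}\ge\frac{2(1+L)}{L^2-1}=\frac{2}{L-1}$. Evaluating $P_2$ at this point, I expect the clean value $P_2\!\left(\frac{2}{L-1}\right)=-4L(L^2-1)<0$, so $\frac{2}{L-1}$ lies strictly between $\sigma_{2,1}$ and $\sigma_{2,2}$, giving $\sigma_{2,1}<\frac{2}{L-1}\le\sigma_{0,2}$.

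The main obstacle — and the only genuinely delicate point — is selecting the two intermediate values so that simultaneously (a) $P_l$ factors at them into a manifestly signed expression and (b) $\sigma_{0,2}$ can be pushed past them using a \emph{standard} logarithm inequality of exactly the right strength. Part (1) is the sharp one: near $L=1$ the gap $\sigma_{1,2}-\sigma^{\mathrm{hi}}$ is only of order $(L-1)^3$, so the weaker bound $\ln L\le\frac{L^2-1}{2L}$ (or any cruder estimate) would not close the argument, and one must use the sharper lower bound $\ln L\ge\frac{2(L-1)}{L+1}$. Once these choices are made, the remaining work is routine algebraic simplification of $P_1(\sigma^{\mathrm{hi}})$ and $P_2\!\left(\frac{2}{L-1}\right)$, together with the two one-line monotonicity arguments for the logarithm bounds.
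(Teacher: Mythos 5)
Your proof is correct, and I checked the two algebraic claims you flagged as "expected": with the $n=2$ coefficients $\tilde{A}_l=L(L^{2l}-1)$, $\tilde{B}_l=-l(L+1)(L^{2l}+1)$, $\tilde{C}_l=l^2(L^{2l}-1)$ one does get
\begin{equation*}
P_1\!\left(\tfrac{(L+1)^2}{2L(L-1)}\right)=-\frac{(L-1)^3(L+1)}{4L},\qquad P_2\!\left(\tfrac{2}{L-1}\right)=-4L(L^2-1),
\end{equation*}
both negative for $L>1$, so the intermediate values do lie strictly between the relevant pairs of roots. Your route is genuinely different from the paper's, and cleaner in two respects. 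For part (1) the paper reduces to $2(L^2-1)\le (L+1)^2\ln L$ — which is exactly your bound $\ln L\ge \frac{2(L-1)}{L+1}$ rearranged — but proves it by checking $\tilde h(1)=\tilde h'(1)=\tilde h''(1)=0$ and $\tilde h'''\ge 0$ and descending; your single-derivative monotonicity argument does the same job in one line. (The paper then lower-bounds the surd in $\sigma_{1,2}$ by $2L$ to reach $\frac{(L+1)^2}{2L(L-1)}$, which is equivalent in effect to your evaluation of $P_1$ there.) The bigger divergence is in part (2): the paper upper-bounds $(1+L^4)-\sqrt{\cdots}$ by $\frac{2(1+L^4)}{L+1}$ and is then forced to prove $\ln L\le\frac{(L^4-1)(L+1)}{2(1+L^4)}$, which requires differentiating a degree-$9$ polynomial and iterating derivative checks up to fourth order. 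By instead locating $\sigma_{2,1}$ via the sign of $P_2$ at the rational point $\frac{2}{L-1}$, you never touch the square root, and the much weaker standard bound $\ln L\le\frac{L^2-1}{2L}$ suffices to push $\sigma_{0,2}$ above that point. What the paper's approach buys is uniformity — its surd-bounding template extends to the $n\ge 3$ lemmas that follow — whereas your argument buys a substantially shorter and more transparent proof in the planar case; your observation that part (1) is the sharp one (the gap being of order $(L-1)^3$, so the crude log bound would fail there) is accurate and worth keeping.
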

    \begin{proof} Note that for $n=2$,
    \begin{align*}
    \sigma_{0,2}= \frac{L+1}{L\, lnL}, \quad \sigma_{1,2} = \frac{(L^2+1)+\sqrt{(L-1)^4+4L^2}}{2L(L-1)} \\
    \sigma_{2,1}=\frac{(1+L^4)-\sqrt{(1+L^4)^2-4L(1+L^2)^2(1-L)^2}}{L(L-1)(1+L^2)}.
    \end{align*}
    \begin{enumerate}
        \item  Inequality $\sigma_{0,2} \leq \sigma_{1,2}$ is equivalent to prove that 
        \begin{align*}
            2(L^2-1)\leq \left((L^2+1)+\sqrt{(L-1)^4+4L^2}\right)lnL.
        \end{align*}
        Again this inequality is true if 
        \begin{align*}
            2(L^2-1)\leq (L+1)^2 \ln L.
        \end{align*}
        We take $\tilde{h}(t)=(t+1)^2 \ln t - 2(t^2-1), 1 \leq t \leq L$. Note that $\tilde{h}(1)= 0, \tilde{h}'(1)= 0, \tilde{h}''(1)= 0$ and $\tilde{h}'''(t) \geq 0$ for all $1 \leq t \leq L$. This gives $\tilde{h}''(t)$ is an increasing function and $\tilde{h}''(t) \geq \tilde{h}''(1) = 0$ for all $1 \leq t \leq L$. Using this argument repeatedly, we get $\tilde{h}(t) \geq 0$ for all $1 \leq t \leq L$. In particular, $\tilde{h}(L) \geq 0,$ which gives $\sigma_{0,2} \leq \sigma_{1,2}$.

        \item Inequality $\sigma_{2,1} \leq \sigma_{0,2}$ is equivalent to prove that
        \begin{align} \label{inequality}
            \left((1+L^4)-\sqrt{(1+L^4)^2-4L(1+L^2)^2(1-L)^2} \right) \ln L \leq (L^4-1).
        \end{align}
        A simple calculation shows that
        \begin{align*}
         (1+L^4)-\sqrt{(1+L^4)^2-4L(1+L^2)^2(1-L)^2} \leq \frac{2(1+L^4)}{(L+1)}.  
        \end{align*}
        So the inequality (\ref{inequality}) is true if 
        \begin{align*}
            \ln L \leq \frac{(L^4-1)(L+1)}{2(1+L^4)}.
        \end{align*}
        We introduce $w(t)=\frac{(t^4-1)(t+1)}{2(1+t^4)}- \ln t, 1 \leq t \leq L$. The idea is to show that $w(t)$ is an increasing function of $t$ by proving that $w'(t) \geq 0$. This combining with $w(1) = 0$ will give the desired result. We have
        \begin{align*}
          w'(t)=\frac{2t^9-4t^8+16t^5+8t^4-2t-4}{4t(1+t^4)}.   
        \end{align*}
        Consider the function $\tilde{w}(t)=2t^9-4t^8+16t^5+8t^4-2t-4.$ It is easy to check that the $k^{\text{th}}$ derivative of $\tilde{w}(t)$ at point $t=1$ i.e., $\tilde{w}^{(k)}(1)\geq 0 $ for $k=1,2,3$, and $\tilde{w}^{(4)}(t)\geq 0$ for all $1 \leq t \leq L$. Now using the same arguments as in the first part of this Lemma, we can prove that $\tilde{w}(t) \geq 0$. Since the denominator term in $w'(t)$ is always positive, we get that $w'(t) \geq 0$.
    \end{enumerate}
        
    \end{proof}

\begin{lemma} \label{positive lemma}
    For $n\geq 3$, consider
    \begin{align*}
       h(t) = (n-2)t^{2n+1}-(n+2)t^{2n}+(n-2)t^{n+3}+(3n-2)t^{n+2}-2n t^{n-1}+4t-2(n-2),
    \end{align*}
    then $h(L) \geq 0$ for each $L > 1$.
\end{lemma}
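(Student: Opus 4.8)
The plan is to mimic the iterated--derivative technique of Lemma~\ref{lem: eigenvalue ineq. for planar}, but to repackage it so that it survives the fact that $\deg h = 2n+1$ grows with $n$. The cleanest way to do this is to expand $h$ about the point $t=1$ and show that \emph{every} coefficient of the resulting polynomial in $(t-1)$ is nonnegative; since $h(1)=0$, this immediately yields $h(t)=\sum_{k\ge 1}c_k(t-1)^k\ge 0$ for all $t\ge 1$, and in particular $h(L)\ge 0$. Writing $t=1+s$ and using $(1+s)^m=\sum_k\binom{m}{k}s^k$, the coefficient of $s^k$ for $k\ge 2$ is
\begin{align*}
P(n,k)=(n-2)\binom{2n+1}{k}-(n+2)\binom{2n}{k}+(n-2)\binom{n+3}{k}+(3n-2)\binom{n+2}{k}-2n\binom{n-1}{k},
\end{align*}
while the constant and linear coefficients are $h(1)=0$ and $h'(1)=2(n-2)(n+2)\ge 0$. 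So the whole problem reduces to proving $P(n,k)\ge 0$ for $n\ge 3$ and $2\le k\le 2n+1$.

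First I would split $P(n,k)=A(n,k)+B(n,k)$, with $A$ the first two terms and $B$ the last three. For $B$, monotonicity of $\binom{m}{k}$ in $m$ gives $\binom{n+3}{k},\binom{n+2}{k}\ge\binom{n-1}{k}$, whence
\begin{align*}
B(n,k)\ge\big[(n-2)+(3n-2)-2n\big]\binom{n-1}{k}=2(n-2)\binom{n-1}{k}\ge 0.
\end{align*}
For $A$, the Pascal identity $\binom{2n+1}{k}=\binom{2n}{k}+\binom{2n}{k-1}$ together with $\binom{2n}{k}=\tfrac{2n-k+1}{k}\binom{2n}{k-1}$ collapses it to
\begin{align*}
A(n,k)=\binom{2n}{k-1}\,\frac{(n+2)k-(8n+4)}{k},
\end{align*}
which is nonnegative as soon as $(n+2)k\ge 8n+4$; since $8(n+2)=8n+16\ge 8n+4$, this holds for every $k\ge 8$. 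Hence $P(n,k)=A+B\ge 0$ for all $k\ge 8$.

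This leaves only the finitely many cases $k\in\{2,3,4,5,6,7\}$ (and among these only those with $k\le 2n+1$), where $P(n,k)$ is an explicit polynomial in $n$ of degree $k$; one checks directly that each is nonnegative for $n\ge 3$ (for instance $P(n,2)=(n-2)(n+2)^2$, and the higher cases factor similarly, using that $\binom{n-1}{k}$ vanishes automatically once $n-1<k$). I expect this finite verification to be the only genuine labor in the argument and hence the main obstacle: it is routine but requires computing and factoring a handful of polynomials in $n$ running up to degree seven. The conceptual point that makes the proof uniform in $n$—and thereby reduces an a priori infinite family of inequalities to six—is precisely the decomposition $P=A+B$ together with the two elementary bounds above.
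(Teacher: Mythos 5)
Your proposal is correct and is essentially the paper's own argument in different clothing: showing that every Taylor coefficient of $h$ at $t=1$ is nonnegative is the same as the paper's verification that $h^{(k)}(1)\ge 0$ for $k\le 7$ together with its grouping of the first two and last three terms to handle order $\ge 8$ (your closed form $A(n,k)=\binom{2n}{k-1}\frac{(n+2)k-(8n+4)}{k}$ is a cleaner way of making that grouping uniform in $k$, but the threshold $k\ge 8$ and the split are identical). Both versions leave the same finite, routine verification for $k=3,\dots,7$ to direct computation, so there is no gap beyond what the paper itself leaves implicit.
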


\begin{proof}
    For a fix $L>1$, let $h^{(k)}(t)$ denotes the $k$th derivative of function $h$ at a point $t \in [1,L]$. We first prove that $h^{(k)}(1) \geq 0$ for all $0 \leq k \leq 7$ and $h^{(8)}(t) \geq 0$. Then using the same idea as in Lemma \ref{lem: eigenvalue ineq. for planar}, our conclusion follows. We have
\begin{align*}
   h(1) =  0, \quad h^{(1)}(1) = 2n^2-8 \geq 0, \quad h^{(2)}(1) = 2(n-2)(n+2)^2 \geq 0.
\end{align*}
For $3 \leq k \leq 2n+1,$
\begin{align*}
    h^{(k)}(t)= & (n-2)\frac{(2n+1)!}{(2n+1-k)!}t^{(2n+1-k)}-(n+2)\frac{(2n)!}{(2n-k)!}t^{(2n-k)}+(n-2)\frac{(n+3)!}{(n+3-k)!}t^{(n+3-k)}\\
    &+(3n-2)\frac{(n+2)!}{(n+2-k)!}t^{(n+2-k)}-2n\frac{(n-1)!}{(n-1-k)!}t^{(n-1-k)},
\end{align*} 
with the condition that the power of $t$ in each term is non-negative; otherwise, we take that term to be zero. By substituting the values of $k$, we can check for $ k=3,4,\dots 7, \,\,h^{(k)}(1) \geq 0 $. For $k=8$,
\begin{align} \nonumber
     h^{(8)}(t)=&(n-2)\frac{(2n+1)!}{(2n+1-8)!} t^{(2n+1-8)}-(n+2)\frac{(2n)!}{(2n-8)!}t^{(2n-8)} \\ \nonumber
     & \quad +(n-2)\frac{(n+3)!}{(n+3-8)!}t^{(n+3-8)} +(3n-2)\frac{(n+2)!}{(n+2-8)!}t^{(n+2-8)}\\ \label{8th derivative}
    & \qquad -2n\frac{(n-1)!}{(n-1-8)!}t^{(n-1-8)}.
\end{align}

Note that  $\bigg((n-2)\frac{(2n+1)!}{(2n+1-8)!} - (n+2)\frac{(2n)!}{(2n-8)!}\bigg)$ and
$\bigg((n-2)\frac{(n+3)!}{(n+3-8)!}+(3n-2)\frac{(n+2)!}{(n+2-8)!}-2n\frac{(n-1)!}{(n-1-8)!}\bigg)$ are positive, we get that $h^{(8)}(t) \geq 0$.
\end{proof}
  
\begin{lemma} \label{lem: inequality1for higher order}
For $n \geq 3$, $L>1$, let $\sigma_{0, 2}$ and $\sigma_{2,1}$ be defined as in Remark \ref{rmk:explicit expression}. Then
    \begin{equation} \label{long inequality}
        \sigma_{2,1} \leq \sigma_{0,2}.
    \end{equation}
\end{lemma}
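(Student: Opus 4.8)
The plan is to eliminate the square root appearing in $\sigma_{2,1}$ and reduce the inequality \eqref{long inequality} to the polynomial positivity $h(L)\ge 0$ furnished by Lemma \ref{positive lemma}. Recall from Case II, specialized to $l=2$, that $\sigma_{2,1}$ is the smaller root of $\tilde A\sigma^2+\tilde B\sigma+\tilde C=0$, where
\begin{align*}
\tilde A=L(L^{n+2}-1)>0,\qquad -\tilde B=L^{n+2}(2+Ln)+(n+2L),\qquad \tilde C=2n(L^{n+2}-1),
\end{align*}
while for $n>2$ Remark \ref{rmk:explicit expression} gives the rational expression $\sigma_{0,2}=\frac{(n-2)(1+L^{n-1})}{L^{n-1}-L}$. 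Since $\tilde A>0$, the parabola $\sigma\mapsto\tilde A\sigma^2+\tilde B\sigma+\tilde C$ opens upward with smaller root $\sigma_{2,1}$, so $\sigma_{2,1}\le\sigma_{0,2}$ will follow once we exhibit a rational upper bound for $\sigma_{2,1}$ that is itself at most $\sigma_{0,2}$.

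First I would bound the discriminant from below exactly as in the proof of Lemma \ref{lem: eigenvalue ineq. for planar}. Writing $\mathcal D=\tilde B^2-4\tilde A\tilde C$ and $P=-\tilde B$, we have $\sigma_{2,1}=\frac{P-\sqrt{\mathcal D}}{2\tilde A}$, so any lower bound $\sqrt{\mathcal D}\ge P\,\theta(L)$ with $0<\theta<1$ yields the rational upper bound $\sigma_{2,1}\le\frac{P(1-\theta)}{2\tilde A}$. As with the factor $\frac{L-1}{L+1}$ in the planar case, $\theta$ must be chosen so that $\sqrt{\mathcal D}\ge P\theta$, i.e. $P^2(1-\theta^2)\ge 4\tilde A\tilde C$, is elementary, while the bound stays tight enough that $\frac{P(1-\theta)}{2\tilde A}\le\sigma_{0,2}$ still holds. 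Clearing the positive denominators $2\tilde A$ and $L^{n-1}-L$ converts this last inequality into a polynomial inequality in $L$.

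The final step is to identify that polynomial, after dividing out a manifestly positive factor (a power of $L$ together with $(n-2)$), with the polynomial $h$ of Lemma \ref{positive lemma}, whence $h(L)\ge0$ closes the argument. I expect the main obstacle to be precisely this bookkeeping: selecting the radical bound $\theta$ so that the square root is removed both legitimately and with little enough loss, and then verifying that the resulting degree-$(2n+1)$ polynomial coincides with a positive multiple of $h(L)$. An alternative route, avoiding the auxiliary $\theta$, is to substitute $\sigma=\sigma_{0,2}$ directly and prove $\tilde A\sigma_{0,2}^2+\tilde B\sigma_{0,2}+\tilde C\le0$ (equivalently, that $\sigma_{0,2}$ lies between the two roots); multiplying by $(L^{n-1}-L)^2>0$ again produces a polynomial inequality in $L$, although this one need not be $h$ itself and so would require its own iterated-derivative analysis rather than a direct appeal to Lemma \ref{positive lemma}. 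For this reason the radical-bound route is the one to pursue, with Lemma \ref{positive lemma} supplying the decisive positivity.
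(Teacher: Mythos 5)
Your strategy coincides with the paper's: replace $\sqrt{\mathcal D}$ by a rational lower bound, thereby obtaining a rational upper bound for $\sigma_{2,1}=\frac{P-\sqrt{\mathcal D}}{2\tilde A}$, and reduce the comparison with $\sigma_{0,2}=\frac{(n-2)(1+L^{n-1})}{L^{n-1}-L}$ to the polynomial positivity supplied by Lemma \ref{positive lemma}. However, the step you defer as ``bookkeeping'' is exactly where the content of the lemma lies, and your template $\sqrt{\mathcal D}\ge P\,\theta(L)$ is slightly too rigid: the bound that actually works is not a multiple of $P$. The paper proceeds as follows. Since $2+Ln=2+L(n-2)+2L\ge n+2L$ for $L>1$, one has $P=L^{n+2}(2+Ln)+(n+2L)\ge (L^{n+2}+1)(n+2L)$, whence
\begin{align*}
\mathcal D \;\ge\; (L^{n+2}+1)^2(n+2L)^2-8Ln(L^{n+2}-1)^2
\;\ge\; (L^{n+2}-1)^2\bigl((n+2L)^2-8Ln\bigr)
\;=\;(L^{n+2}-1)^2(2L-n)^2,
\end{align*}
so $\sqrt{\mathcal D}\ge (L^{n+2}-1)(2L-n)$ and therefore
\begin{align*}
P-\sqrt{\mathcal D}\;\le\;(n-2)L^{n+3}+(n+2)L^{n+2}+4L .
\end{align*}
Multiplying \eqref{long inequality} through by the positive quantities $2\tilde A$ and $L^{n-1}-L$ and inserting this bound reduces the claim to
\begin{align*}
\bigl((n-2)L^{n+3}+(n+2)L^{n+2}+4L\bigr)(L^{n-2}-1)\;\le\;2(n-2)(L^{n-1}+1)(L^{n+2}-1),
\end{align*}
and the difference of the two sides is precisely $h(L)$, so Lemma \ref{positive lemma} closes the argument exactly as you predicted. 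With this one computation supplied your outline becomes the paper's proof; without it the argument is not yet complete, since producing a radical bound that is both elementary to justify and tight enough to survive the comparison with $\sigma_{0,2}$ is the only nontrivial point here, just as the factor $\frac{L-1}{L+1}$ was in part (2) of Lemma \ref{lem: eigenvalue ineq. for planar}.
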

\begin{proof}
The inequality (\ref{long inequality}) is equivalent to prove 
    
   \begin{align} \nonumber
&\left( L^{n+2}(2+Ln)+(n+2L)-\sqrt{\left( L^{n+2}(2+Ln)+(n+2L) \right)^2-8Ln(L^{n+2}-1)^2} \right)(L^{n-2}-1)  \\  \label{ineq: first}
&\leq 2(n-2)(L^{n-1}+1)(L^{n+2}-1).
\end{align}

 Since $(2+Ln)=2+L(n-2) +2L \geq 2+(n-2)+2L=n+2L$. Using this, we have
 \begin{align*}
  \left( L^{n+2}(2+Ln)+(n+2L) \right)^2-8Ln(L^{n+2}-1)^2 & \geq  \left( L^{n+2} + 1 \right)^{2} (n+2L)^{2} - 8 Ln \left( L^{n+2} - 1 \right)^{2} \\
  & \geq \left( L^{n+2} - 1 \right)^{2} \left(  (n+2L)^{2} - 8 Ln \right) \\
  & =  \left( L^{n+2} - 1 \right)^{2}  (2L - n)^{2}.
 \end{align*}
 This gives,
 \begin{align*}
     &\left( L^{n+2}(2+Ln)+(n+2L)-\sqrt{\left( L^{n+2}(2+Ln)+(n+2L) \right)^2-8Ln(L^{n+2}-1)^2} \right)\\
     & \leq (n+2)L^{n+2}+(n-2)L^{n+3}+4L.
 \end{align*}
 Now the inequality \eqref{ineq: first} is true if 
 \begin{align*}
     \left( (n+2)L^{n+2}+(n-2)L^{n+3}+4L\right)(L^{n-2}-1) \leq 2(n-2)(L^{n-1}+1)(L^{n+2}-1)
 \end{align*}
 or equivalently, if
 \begin{align*}
     (n-2)L^{2n+1}-(n+2)L^{2n}+(n-2)L^{n+3}+(3n-2)L^{n+2}-2nL^{n-1}+4L-2(n-2) \geq 0
 \end{align*}
is true. Now the conclusion follows from Lemma (\ref{positive lemma}).
\end{proof} 
       \begin{lemma} \label{lem: inequality2for higher order}
      For $n \geq 3$, $L>1$, let $\sigma_{1, 2}$ and $\sigma_{2,1}$ be defined as in Remark \ref{rmk:explicit expression}. Then 
      \begin{align} \label{2nd inequality}
          \sigma_{2,1} \leq \sigma_{1,2}.
      \end{align}
  \end{lemma}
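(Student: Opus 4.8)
The plan is to sandwich $\sigma_{2,1}$ and $\sigma_{1,2}$ between explicit rational functions of $L$, exactly as in the proof of Lemma \ref{lem: inequality1for higher order}, and then reduce \eqref{2nd inequality} to a single polynomial inequality $g(L)\ge 0$ that can be attacked by the repeated-differentiation (``Taylor at $t=1$'') argument already used in Lemmas \ref{lem: eigenvalue ineq. for planar} and \ref{positive lemma}.

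First I would recycle the upper bound for $\sigma_{2,1}$ obtained inside the proof of Lemma \ref{lem: inequality1for higher order}, namely
\[
\sigma_{2,1}\le \frac{(n-2)L^{n+3}+(n+2)L^{n+2}+4L}{2L(L^{n+2}-1)},
\]
which came from replacing the discriminant of the $l=2$ quadratic by the lower bound $(L^{n+2}-1)^2(2L-n)^2$. The new ingredient is a matching \emph{lower} bound for $\sigma_{1,2}$. Writing $P=(L+n-1)+L^n(1+L(n-1))$ and $R=4L(n-1)(L^n-1)^2$, so that $\sigma_{1,2}=\frac{P+\sqrt{P^2-R}}{2L(L^n-1)}$, I would combine $\sqrt{P^2-R}\ge P-\sqrt R$ with the AM--GM estimate $\sqrt R=2\sqrt{L(n-1)}\,(L^n-1)\le (L+n-1)(L^n-1)$ to obtain
\[
\sigma_{1,2}\ge \frac{2P-(L+n-1)(L^n-1)}{2L(L^n-1)}=\frac{(2n-3)L^{n+1}+(3-n)L^n+3L+3(n-1)}{2L(L^n-1)}.
\]
It is worth emphasising that merely dropping the square root (i.e.\ $\sigma_{1,2}\ge \frac{P}{2L(L^n-1)}$) is too weak: the two crude bounds then overlap, so retaining the $\sqrt R$ correction through AM--GM is essential.

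With both bounds in hand, \eqref{2nd inequality} follows once the upper bound for $\sigma_{2,1}$ is shown to be at most the lower bound for $\sigma_{1,2}$. Cancelling the common factor $2L$ and cross-multiplying the positive denominators $L^{n+2}-1$ and $L^n-1$ turns this into $g(L)\ge 0$, where
\begin{align*}
g(L)=&\,(n-1)L^{2n+3}-(2n-1)L^{2n+2}+(n+1)L^{n+3}+(4n-1)L^{n+2}\\
&\,-(2n+1)L^{n+1}+(n-3)L^{n}+L-3(n-1).
\end{align*}
One checks immediately that $g(1)=0$ (the coefficients sum to zero, with the $n$-dependence cancelling) and that the leading coefficient $n-1$ is positive. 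I would then finish exactly as in Lemma \ref{positive lemma}: show $g^{(k)}(1)\ge 0$ for $k=0,1,\dots,m-1$ and $g^{(m)}(t)\ge 0$ for all $t\ge 1$ at a suitable top order $m$ (for $n=3$ one can take $m=6$, where $g^{(6)}(t)\ge 20160\,t^2+2880>0$), and then integrate up from $t=1$ to conclude $g(L)\ge 0$ for every $L>1$.

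The main obstacle is precisely this last step: because the coefficients of $g$ carry the free parameter $n\ge 3$ and change sign, verifying the pattern $g^{(k)}(1)\ge 0$ and pinning down the order $m$ at which $g^{(m)}$ becomes manifestly nonnegative requires the same careful factorial bookkeeping as in Lemma \ref{positive lemma}; the cleanest organisation is to isolate ``$g(L)\ge 0$'' as a standalone positivity lemma (an analogue of Lemma \ref{positive lemma}). As an alternative route one could instead prove $\sigma_{0,2}\le\sigma_{1,2}$ for $n\ge3$ — for instance by verifying $q_1(\sigma_{0,2})\le 0$ for the $l=1$ quadratic $q_1$, which places $\sigma_{0,2}$ between its two roots — and then combine it with $\sigma_{2,1}\le\sigma_{0,2}$ from Lemma \ref{lem: inequality1for higher order} by transitivity; this trades the degree-$(2n+3)$ polynomial for a separate positivity check of comparable size.
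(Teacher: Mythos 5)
Your reduction is algebraically correct and the route is genuinely different from the paper's. The paper proves \eqref{2nd inequality} by discarding only the square root in $\sigma_{1,2}$ (i.e.\ using $\sigma_{1,2}\ge P/(2L(L^n-1))$), keeping the exact square root in $\sigma_{2,1}$, and then isolating and squaring it, which produces a polynomial inequality of degree $4n+6$ whose positivity is asserted ``by the same technique as Lemma \ref{positive lemma}.'' You instead bound \emph{both} radicals by rational expressions: the $(L^{n+2}-1)^2(2L-n)^2$ discriminant bound recycled from Lemma \ref{lem: inequality1for higher order} on the $\sigma_{2,1}$ side, and the $\sqrt{P^2-R}\ge P-\sqrt R$ plus AM--GM refinement on the $\sigma_{1,2}$ side. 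Your observation that the refinement is genuinely needed is correct (at $n=3$, $L=2$ the crude bounds give $\sigma_{2,1}\le 232/124\approx 1.87$ versus $P/(2L(L^n-1))=44/28\approx 1.57$, so they do overlap), and I checked that your polynomial $g$ has exactly the stated coefficients, that $g(1)=0$, $g'(1)=2n^2+4n\ge 0$, and that your $n=3$ computation through $g^{(6)}$ is right. What your route buys is a lower-degree target ($2n+3$ versus $4n+6$) and no squaring step; what it costs is the extra AM--GM lemma. The one place you oversell is the claim that the finish goes ``exactly as in Lemma \ref{positive lemma}'': there a \emph{fixed} order $m=8$ makes the top coefficient dominate for every $n$, because $(n-2)(2n+1)\ge(n+2)(2n-7)$ holds identically, whereas for your $g$ the analogous requirement $(n-1)(2n+3)\ge(2n-1)(2n+3-m)$ forces $m\ge n+3$, so the number of initial conditions $g^{(k)}(1)\ge 0$ to be verified grows with $n$ and needs a general argument rather than a finite substitution check (or a different grouping of terms). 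That said, the paper's own proof leaves its degree-$(4n+6)$ positivity claim at exactly the same level of ``it can be proved by the similar technique,'' so your proposal is no less complete than the original; your fallback via transitivity through $\sigma_{0,2}$ is also a viable alternative.
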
 
  \begin{proof}
        Since $\sqrt{\left((L+n-1)+L^n(1+L(n-1))\right)^2-4L(L^n-1)^2(n-1)} \geq 0$, so the inequality (\ref{2nd inequality}) is true if 
         \begin{align*}
         & \left(L^{n+2}(2+Ln)+(n+2L)-\sqrt{\left( L^{n+2}(2+Ln)+(n+2L) \right)^2-8Ln(L^{n+2}-1)^2}\right)(L^n-1)  \\
         & \leq \left((L+n-1)+L^n(1+L(n-1)\right)(L^{n+2}-1).
         \end{align*}
         or equivalently, if 
         \begin{align*}
&\left( n^2L^{2n+6)}-4nL^{2n+5}+4L^{2n+4}+(2n^2+16n+8)L^{n+3}+4nL^{n+2}+4L^2-4Ln+n^2\right) \\ 
& \times (L^n-1)^2 -\left(L^{2n+2}-(n+1)L^{n+2}+(n+1)L^n-1\right)^2(1+L)^2 \geq 0.
         \end{align*} 
Now consider
\begin{align*}
    \tilde{h}(L)=&\left( n^2L^{2n+6)}-4nL^{2n+5}+4L^{2n+4}+(2n^2+16n+8)L^{n+3}+4nL^{n+2}+4L^2-4Ln+n^2\right) \\
& \times (L^n-1)^2 - \left(L^{2n+2}-(n+1)L^{n+2}+(n+1)L^n-1\right)^2(1+L)^2.
\end{align*}
 Applying the similar technique as in the proof of Lemma \ref{positive lemma}, it can be proved that $\tilde{h}(L)\geq 0$. Hence the conclusion follows.
  \end{proof}
  
  \begin{lemma} \label{lem:Illias}
      For $l\geq 0, i\in\{1, 2\}$, if $\sigma_{l,i}$ are Steklov eigenvalues on annular domain $\Omega_0$. Then
      \begin{enumerate}
          \item the sequence $\{\sigma_{l, 1}\}_{l\geq 0}$ is strictly increasing.
          \item $\sigma_{1, 1} \leq \sigma_{0, 2}$. In particular, the first nonzero Steklov eigenvalue of $\Omega_{0}$ is $\sigma_{1}(\Omega_{0}) = \sigma_{1, 1}$ with multiplicity $n$.
       \end{enumerate}
  \end{lemma}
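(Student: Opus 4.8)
The plan is to prove both parts through the variational description of the radial Steklov eigenvalues. Writing an eigenfunction of \eqref{stk on ball-ball} in mode $l$ as $u=f(r)g(\omega)$ with $g$ a spherical harmonic of degree $l$ normalized by $\int_{S^{n-1}}g^2\,d\omega=1$, separation of variables turns the Steklov Rayleigh quotient into the one--dimensional quotient
\[
R_l[f]=\frac{\int_1^L (f')^2 r^{n-1}\,\dr+l(l+n-2)\int_1^L f^2 r^{n-3}\,\dr}{f(1)^2+L^{n-1}f(L)^2}.
\]
Since minimizing the numerator over $f$ with prescribed boundary values produces a solution of the radial ODE \eqref{ode}, and that two--dimensional solution space carries exactly the two Steklov values $\sigma_{l,1}\le\sigma_{l,2}$, one has $\sigma_{l,1}=\min_f R_l[f]$, the minimum running over all admissible $f$ with $f(1)^2+L^{n-1}f(L)^2>0$.

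For part (1) I would compare $R_l$ and $R_{l+1}$ directly. The denominators coincide and the numerators differ only in the angular coefficient, so
\[
R_{l+1}[f]-R_l[f]=(2l+n-1)\,\frac{\int_1^L f^2 r^{n-3}\,\dr}{f(1)^2+L^{n-1}f(L)^2}>0
\]
for every $f\not\equiv0$, because $2l+n-1>0$ and $r^{n-3}>0$ on $(1,L)$. Evaluating this at the minimizer $f_{l+1,1}$ of $R_{l+1}$ (a genuine eigenfunction, hence nonzero and with nonzero boundary trace) gives
\[
\sigma_{l,1}\le R_l[f_{l+1,1}]<R_{l+1}[f_{l+1,1}]=\sigma_{l+1,1},
\]
which is the strict monotonicity.

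For part (2) I would avoid the messy smaller root by using, for $l=1$, the product and sum of the roots of $\tilde A\sigma^2+\tilde B\sigma+\tilde C=0$: the product is $\tilde C/\tilde A=(n-1)/L>0$ and the sum $-\tilde B/\tilde A>0$, so both roots are positive, and they are distinct since $\mathcal D>0$. Hence $\sigma_{1,1}^2<\sigma_{1,1}\sigma_{1,2}=(n-1)/L$, i.e. $\sigma_{1,1}<\sqrt{(n-1)/L}$. It therefore suffices to prove the single--variable bound $\sqrt{(n-1)/L}\le\sigma_{0,2}$. Using the explicit values from Remark \ref{rmk:explicit expression}, this reduces for $n\ge3$, after squaring and clearing denominators, to the polynomial inequality $(n-1)L(L^{n-2}-1)^2\le(n-2)^2(1+L^{n-1})^2$, and for $n=2$ to $\sqrt L\,\ln L\le 1+L$; both are established by the Taylor--at--$L=1$ / high--derivative--positivity argument already used in Lemma \ref{lem: eigenvalue ineq. for planar} and Lemma \ref{positive lemma}. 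Since $\sigma_{1,1}<\sqrt{(n-1)/L}$ is strict, this yields $\sigma_{1,1}<\sigma_{0,2}$.

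Finally I would assemble the multiplicity statement. Part (1) gives $\sigma_{1,1}<\sigma_{l,1}<\sigma_{l,2}$ for all $l\ge2$; the distinctness of the two $l=1$ roots gives $\sigma_{1,1}<\sigma_{1,2}$; and part (2) gives $\sigma_{1,1}<\sigma_{0,2}$. Thus $\sigma_{1,1}$ is strictly smaller than every other Steklov value $\sigma_{l,i}$ with $(l,i)\ne(0,1),(1,1)$, so the first nonzero eigenvalue is $\sigma_1(\Omega_0)=\sigma_{1,1}$ with multiplicity $\dim H_1=n$, recovering the result of \cite{ftouhi2022place}. I expect the genuine obstacle to be part (2): the reduction to $\sqrt{(n-1)/L}\le\sigma_{0,2}$ is clean, but verifying that single inequality uniformly in $L>1$ across the two regimes $n=2$ and $n\ge3$ is where the real work sits, and one should confirm the geometric--mean bound is not too lossy (i.e. that $\sqrt{(n-1)/L}\le\sigma_{0,2}$ genuinely holds for all $L>1$); should it ever fail, the fallback is to show directly that the $l=1$ quadratic $\tilde A\sigma^2+\tilde B\sigma+\tilde C$ is nonpositive at $\sigma_{0,2}$, which places $\sigma_{0,2}$ between $\sigma_{1,1}$ and $\sigma_{1,2}$.
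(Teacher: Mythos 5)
Your proposal is correct, but it cannot be compared line-by-line with the paper, because the paper offers no proof of this lemma at all: it simply cites \cite{ftouhi2022place}, Lemma 4.3. What you have written is therefore a self-contained alternative. Part (1) is airtight: the identification $\sigma_{l,1}=\min_f R_l[f]$ is justified (minimizing the numerator over fixed boundary data reduces $R_l$ to a ratio of quadratic forms on $\mathbb{R}^2$ whose two critical values are exactly the roots of $\tilde A\sigma^2+\tilde B\sigma+\tilde C=0$), the difference $R_{l+1}-R_l$ has coefficient $(l+1)(l+n-1)-l(l+n-2)=2l+n-1>0$, and the boundary trace of $f_{l+1,1}$ cannot vanish since zero Cauchy data would force $f_{l+1,1}\equiv 0$. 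Part (2) via Vieta is a nice shortcut ($\tilde C/\tilde A=(n-1)/L$ is correct), and the reduction to $\sqrt{(n-1)/L}\le\sigma_{0,2}$ is the only place where you defer work; I checked that the deferred inequalities do hold. For $n=2$ the function $\phi(L)=1+L-\sqrt{L}\,\ln L$ satisfies $\phi(1)=2$, $\phi'(1)=0$ and $\phi''(L)=\frac{\ln L}{4L^{3/2}}\ge 0$, so $\phi>0$. For $n\ge 4$ one does not even need the polynomial inequality: $\sigma_{0,2}=(n-2)\frac{1+L^{n-1}}{L^{n-1}-L}>n-2\ge\sqrt{n-1}>\sqrt{(n-1)/L}$, and only $n=3$ requires the explicit check $2L(L-1)^2\le(1+L^2)^2$, which follows from $L^4-2L^3+6L^2-2L+1=L^2(L-1)^2+5L^2-2L+1>0$. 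With those verifications supplied, your argument is a complete and arguably more transparent proof than an external citation, and it yields the strict inequality $\sigma_{1,1}<\sigma_{0,2}$, which is what the multiplicity-$n$ claim actually needs.
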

  For a proof of this Lemma, see [\cite{ftouhi2022place}, Lemma 4.3].

\noindent \textbf{Proof of Theorem \ref{thm:increasing}:} For $n=2$, it follows from Lemma \ref{lem: eigenvalue ineq. for planar} and \ref{lem:Illias} that
\begin{align*}
\sigma_{1, 1} \leq \sigma_{2,1} \leq \sigma_{0, 2} \leq \sigma_{1,2} \quad \text{and} \quad 0 = \sigma_{0, 1} \leq  \sigma_{1, 1} \leq  \sigma_{2, 1} \leq  \sigma_{3, 1} \leq \cdots.
\end{align*}
Combining this with the relation $\sigma_{l,1} < \sigma_{l,2}$ for all $l \geq 0$, we conclude that $\sigma_{2,1}$ is the next smallest eigenvalue after $\sigma_{1,1}$ i.e., $\sigma_{n+1}(\Omega_{0}) = \sigma_{2,1}$. Similarly for $n \geq 3$, $\sigma_{n+1}(\Omega_{0}) = \sigma_{2,1}$ follows from Lemma \ref{lem: inequality1for higher order}, Lemma \ref{lem: inequality2for higher order} and Lemma \ref{lem:Illias}. 
Note that eigenfunctions of \eqref{stk on ball-ball} are of the form $f_{2,1}(r) g_{2}(\omega)$ corresponding to eigenvalue $\sigma_{2,1}$, where $f_{2,1}(r)$ is defined in \eqref{eqn: eigenfunction ODE} and $g_{2}(\omega)$ is an eigenfunction of $\Delta_{\mathbb{S}^{n-1}}$ corresponding to eigenvalue $2n$ i.e., it is an element of the set of all spherical harmonics of degree two. As we already mentioned in the last subsection that multiplicity of $ \sigma_2(\Omega_{0}) = \sigma_{2,1}$ = dim $H_{2} = \frac{(n+2)(n-1)}{2}.$

\section{Some integral inequalities} \label{sec:int. ineq.}

In this section, we provide some results related to function $f_{1,1}(r)$, which will be used in the next section to prove our second main result about higher Steklov eigenvalues, Theorem \ref{thm:isoperimetric}.

\begin{definition}
    A domain  $W\subset \mathbb{R}^n$ is said to be symmetric of order $s$ with respect to the origin, if $R_{i,j}^{\frac{2\pi}{s}}(W)=W$ for all $1\leq i <j \leq n,$ where $R_{i,j}^{\frac{2\pi}{s}}$ denotes anticlockwise rotation with respect to the origin by angle $\frac{2\pi}{s}$ in the coordinate plane $(x_i, x_j).$
\end{definition}

Hereafter, we let $\Omega_{out} \subset \mathbb{R}^n$ be a connected open bounded smooth domain with symmetry of order 4 with respect to the origin and $B_1$ be a unit ball centered at the origin such that $ \Bar{ B_1 }\subset \Omega_{out}$. Denote $\Omega = \Omega_{out} \backslash \Bar{B_1}$. Consider $\Omega_0 = B_L \backslash \Bar{ B_1}$ where $B_L$ is a ball of radius $L$ centered at the origin such that $Vol(B_L)= Vol(\Omega_{out})$ i.e., $Vol(\Omega)= Vol(\Omega_0)$. Let $f_{1,1}:[1,L] \rightarrow \mathbb{R} $ be the function defined as in \eqref{eqn: eigenfunction ODE}. From now onwards, we consider $f_{1,1}:[1,\infty) \rightarrow \mathbb{R} $ by extending function $f_{1,1} (r)$ radially to $[1, \infty)$.

\begin{lemma} \label{decreasing lemma}
Define $F,G : [1,\infty) \rightarrow \mathbb{R}$ as
\begin{align*}
 F(r) = \left((f'_{1,1}(r))^2 + \frac{(n-1)}{r^2} f_{1,1}^2(r)\right)    
\end{align*}
and
\begin{align*}
G(r) = \left( 2f_{1,1}(r)f_{1,1}'(r) + \frac{n-1}{r}f_{1,1}^2(r) \right).   
\end{align*}
Then $F$ is a decreasing function of $r$, and $G$ is an increasing function of $r$.
\end{lemma}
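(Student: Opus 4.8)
The plan is to exploit the fact that $f_{1,1}$ is, by construction, a solution of the ordinary differential equation \eqref{ode} with $l=1$, for which $l(l+n-2)=n-1$. Thus on $[1,\infty)$ (the radial extension preserves this, since $f_{1,1}(r)=r+c\,r^{-(n-1)}$ solves the equation for every $r\ge 1$) we have the pointwise identity
\begin{align*}
f_{1,1}''(r) = -\frac{n-1}{r}\,f_{1,1}'(r) + \frac{n-1}{r^2}\,f_{1,1}(r).
\end{align*}
The entire argument is then a direct differentiation of $F$ and $G$ followed by substitution of this identity to eliminate $f_{1,1}''$; no monotonicity of the eigenvalue or sign analysis of the constant $c$ is needed. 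I write $f=f_{1,1}$ below for brevity.

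For $F$, I would compute
\begin{align*}
F'(r) = 2f'f'' - \frac{2(n-1)}{r^3}f^2 + \frac{2(n-1)}{r^2}f f',
\end{align*}
and replace $2f'f''$ using the ODE, which gives $-\tfrac{2(n-1)}{r}(f')^2+\tfrac{2(n-1)}{r^2}ff'$. Collecting all three terms produces
\begin{align*}
F'(r) = -\frac{2(n-1)}{r}\left[(f')^2 - \frac{2}{r}ff' + \frac{1}{r^2}f^2\right] = -\frac{2(n-1)}{r}\left(f'(r)-\frac{f(r)}{r}\right)^2,
\end{align*}
which is $\le 0$ for all $r\ge 1$ since $n\ge 2$; hence $F$ is decreasing. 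The point I want to highlight is that the cross terms combine into a perfect square, so the sign is automatic.

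For $G$, the same scheme applies: differentiating gives $G'(r)=2(f')^2+2ff''-\tfrac{n-1}{r^2}f^2+\tfrac{2(n-1)}{r}ff'$, and substituting $2ff''=-\tfrac{2(n-1)}{r}ff'+\tfrac{2(n-1)}{r^2}f^2$ cancels the two $ff'$ terms and leaves
\begin{align*}
G'(r) = 2\bigl(f'(r)\bigr)^2 + \frac{n-1}{r^2}f^2(r) \ge 0,
\end{align*}
so $G$ is increasing. I do not anticipate any genuine obstacle here: the result is a clean consequence of the second-order ODE, and the only care needed is (i) correctly substituting $f''$ rather than differentiating the explicit formula, which keeps the algebra short, and (ii) noting that the radial extension of $f_{1,1}$ to $[1,\infty)$ continues to satisfy the same equation so that the identity is valid on the full half-line.
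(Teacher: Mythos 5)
Your proof is correct, and it takes a genuinely different route from the paper's. The paper substitutes the explicit closed form $f_{1,1}(r)=r+\tfrac{1+\sigma_{1,1}}{n-1-\sigma_{1,1}}r^{-(n-1)}$ into $F$ and $G$, differentiates the resulting explicit expressions, and reads off the signs term by term. You instead keep $f_{1,1}$ abstract and use only the fact that it solves the radial equation \eqref{ode} with $l=1$ (so $l(l+n-2)=n-1$), eliminating $f_{1,1}''$ via $f_{1,1}''=-\tfrac{n-1}{r}f_{1,1}'+\tfrac{n-1}{r^2}f_{1,1}$; I checked the algebra and both identities
\begin{align*}
F'(r)=-\frac{2(n-1)}{r}\Bigl(f_{1,1}'(r)-\frac{f_{1,1}(r)}{r}\Bigr)^2\le 0, \qquad G'(r)=2\bigl(f_{1,1}'(r)\bigr)^2+\frac{n-1}{r^2}f_{1,1}^2(r)\ge 0
\end{align*}
are right and agree with the paper's explicit formulas (your $F'$ reproduces the paper's $-\tfrac{2n^2(n-1)(1+\sigma_{1,1})^2}{(n-1-\sigma_{1,1})^2}r^{-(2n+1)}$ exactly; for $G'$ the paper's displayed last term should read $r^{-2n}$ rather than $r^{-n}$, and your structural form sidesteps that bookkeeping entirely). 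What your approach buys is brevity and robustness: the conclusion holds for \emph{any} solution $C_1 r+C_2 r^{-(n-1)}$ of the $l=1$ equation, with no dependence on the particular constant or on $\sigma_{1,1}$, and the signs are automatic because the cross terms assemble into perfect squares. What the paper's computation buys is the explicit formulas for $F$, $G$, $F'$, $G'$ that are convenient for the later integral estimates. Your observation that the radial extension to $[1,\infty)$ continues to satisfy the same ODE is the one point that needed to be said, and you said it.
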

\begin{proof}
    Recall that $f_{1,1}(r)=r + \frac{1+\sigma_{1,1}}{n-1-\sigma_{1,1}}\frac{1}{r^{n-1}}$, then $f_{1,1}'(r)=1-\frac{1+\sigma_{1,1}}{n-1-\sigma_{1,1}}\frac{n-1}{r^n}$. Substituting these values, we get 
    \begin{align*}
        F(r)=& n+\frac{(1+\sigma_{1,1})^2(n-1)n}{(n-1-\sigma_{1,1})^2}\frac{1}{r^{2n}}, \qquad \text{ and } \\
        G(r)=& (n+1)r+\frac{(1+\sigma_{1,1})}{(n-1-\sigma_{1,1})}\frac{2}{r^{(n-1)}}-\frac{(1+\sigma_{1,1})^2(n-1)}{(n-1-\sigma_{1,1})^2}\frac{1}{r^{(2n-1)}}.
    \end{align*}
    By differentiating these functions, we have for all $r \in [1, \infty)$,
    \begin{align*}
        F'(r)=&-\frac{2n^2(n-1)(1+\sigma_{1,1})^2}{(n-1-\sigma_{1,1})^2}\frac{1}{r^{(2n+1)}} \leq 0,  \qquad  \text{ and } \\
     G'(r)=&2+(n-1)\Big (\frac{(1+\sigma_{1,1})}{(n-1-\sigma_{1,1})}\frac{1}{r^n}-1\Big )^2+\frac{2(1+\sigma_{1,1})^2(n-1)^2}{(n-1-\sigma_{1,1})^2}\frac{1}{r^n} \geq 0.
    \end{align*}
    This proves our claim.
\end{proof}

\begin{lemma} \label{lem:integral2}
    Let $F$ be a decreasing radial function defined as in Lemma \ref{decreasing lemma} and $\Omega, \Omega_0$ be defined as above. Then the following inequality holds.
    \begin{equation}
      \int_\Omega F(r) dV \leq \int_{\Omega_0} F(r) dV.
    \end{equation}
\end{lemma}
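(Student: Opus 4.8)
The plan is to compare the two integrals by analyzing the difference over the symmetric difference of the two regions, exploiting that $\Omega$ and $\Omega_0$ both exclude the same inner hole $\bar{B_1}$ and have equal volume. Writing
\begin{align*}
\int_\Omega F\,dV - \int_{\Omega_0} F\,dV = \int_{\Omega\setminus\Omega_0} F\,dV - \int_{\Omega_0\setminus\Omega} F\,dV,
\end{align*}
the task reduces to controlling the two ``excess'' pieces and showing their net contribution is nonpositive.

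First I would identify these pieces geometrically. If $x\in\Omega\setminus\Omega_0$, then $x\in\Omega_{out}$ with $|x|>1$, and since $x\notin\bar{B_1}$ the condition $x\notin\Omega_0 = B_L\setminus\bar{B_1}$ forces $|x|\geq L$. Conversely, if $x\in\Omega_0\setminus\Omega$, then $1<|x|<L$ while $x\notin\Omega_{out}$. Thus the sphere $\{r=L\}$ separates the two excess regions: $\Omega\setminus\Omega_0$ lies in $\{r\geq L\}$ and $\Omega_0\setminus\Omega$ lies in $\{r\leq L\}$ (up to null sets).

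Next, from $\mathrm{Vol}(\Omega)=\mathrm{Vol}(\Omega_0)$ and subtracting the common part $\Omega\cap\Omega_0$, I obtain $\mathrm{Vol}(\Omega\setminus\Omega_0)=\mathrm{Vol}(\Omega_0\setminus\Omega)$. Since $F$ is a decreasing function of $r$ by Lemma \ref{decreasing lemma}, on $\Omega\setminus\Omega_0$ (where $r\geq L$) we have $F(r)\leq F(L)$, whereas on $\Omega_0\setminus\Omega$ (where $r\leq L$) we have $F(r)\geq F(L)$. Combining these estimates with the volume equality gives
\begin{align*}
\int_{\Omega\setminus\Omega_0} F\,dV \leq F(L)\,\mathrm{Vol}(\Omega\setminus\Omega_0) = F(L)\,\mathrm{Vol}(\Omega_0\setminus\Omega) \leq \int_{\Omega_0\setminus\Omega} F\,dV,
\end{align*}
which is exactly the desired inequality.

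This is in essence a bathtub/rearrangement comparison, so I do not anticipate a serious obstacle; the only point requiring care is the precise set-theoretic identification of the excess regions and checking that they are genuinely separated by the sphere $r=L$ up to a set of measure zero, so that the monotonicity bound by $F(L)$ applies cleanly on each piece. It is worth noting that this step uses only the volume constraint $\mathrm{Vol}(\Omega)=\mathrm{Vol}(\Omega_0)$ and the radial monotonicity of $F$, and not the order-$4$ symmetry of $\Omega_{out}$, which will instead be needed later when constructing admissible test functions for the eigenvalue estimate in Theorem \ref{thm:isoperimetric}.
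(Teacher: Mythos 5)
Your proof is correct and is exactly the standard bathtub-type comparison the paper relies on: the paper delegates this lemma to a citation, but the identical decomposition (symmetric difference separated by the sphere $r=L$, equal volumes of the excess pieces, and monotonicity of the radial function) is precisely what the paper carries out for the increasing function $G$ in the proof of Lemma \ref{integral6}. No gaps.
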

For a proof, see \cite{basak2023sharp}.

\begin{lemma}\label{integral6}
   Let $f_{1,1}(r), \Omega_{out}$ and $B_L$ be defined as above, and $\partial \Omega_{out}, \partial B_L$ are boundaries of $\Omega_{out}, B_L,$ respectively. Then the following inequality holds.
   \begin{equation} \label{inequality 5}
      \int_{\partial {\Omega_{out}}} f_{1,1}^2(r) dS \geq  \int_{\partial {B_L}} f_{1,1}^2(r) dS.
   \end{equation}
\end{lemma}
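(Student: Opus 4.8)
The plan is to convert the boundary integrals into volume integrals by the divergence theorem, so that the monotonicity supplied by Lemma \ref{decreasing lemma} can be brought to bear. The key observation is that the increasing function $G$ is precisely the divergence of the radial vector field $f_{1,1}^2(|x|)\,\frac{x}{|x|}$. Indeed, writing $h(r)=f_{1,1}^2(r)$ and $r=|x|$, a direct computation gives
\[
\operatorname{div}\!\left(h(|x|)\,\frac{x}{|x|}\right) = h'(r) + \frac{n-1}{r}h(r) = 2 f_{1,1}(r) f_{1,1}'(r) + \frac{n-1}{r} f_{1,1}^2(r) = G(r),
\]
and this field is smooth on $\overline{\Omega}$ and on $\overline{\Omega_0}$ because $r\geq 1$ there.

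Next I would apply the divergence theorem on $\Omega=\Omega_{out}\setminus\overline{B_1}$ and on $\Omega_0=B_L\setminus\overline{B_1}$. On the inner sphere $\partial B_1$ the outward normal of each domain is $-\frac{x}{|x|}$ and $f_{1,1}^2\equiv f_{1,1}^2(1)$, so the inner contribution equals $-f_{1,1}^2(1)\,|\partial B_1|$ for both domains and cancels. This produces
\[
\int_{\partial\Omega_{out}} f_{1,1}^2\left\langle \frac{x}{|x|},\nu\right\rangle dS - f_{1,1}^2(1)\,|\partial B_1| = \int_\Omega G\, dV, \qquad f_{1,1}^2(L)\,|\partial B_L| - f_{1,1}^2(1)\,|\partial B_1| = \int_{\Omega_0} G\, dV,
\]
where the second identity uses that $\frac{x}{|x|}=\nu$ and $r\equiv L$ on $\partial B_L$.

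Then I would compare the two volume integrals. Since $\operatorname{Vol}(\Omega)=\operatorname{Vol}(\Omega_0)$, the symmetric differences satisfy $\operatorname{Vol}(\Omega\setminus\Omega_0)=\operatorname{Vol}(\Omega_0\setminus\Omega)$, with $r\geq L$ on $\Omega\setminus\Omega_0=\Omega_{out}\setminus B_L$ and $r\leq L$ on $\Omega_0\setminus\Omega=B_L\setminus\Omega_{out}$. As $G$ is increasing by Lemma \ref{decreasing lemma}, the same rearrangement argument that proves Lemma \ref{lem:integral2} (now run for an increasing integrand, which reverses the inequality) yields $\int_\Omega G\,dV \geq \int_{\Omega_0} G\,dV$. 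Combining this with the two identities above, the inner terms cancel and we obtain $\int_{\partial\Omega_{out}} f_{1,1}^2\left\langle \frac{x}{|x|},\nu\right\rangle dS \geq f_{1,1}^2(L)\,|\partial B_L| = \int_{\partial B_L} f_{1,1}^2\,dS$. Finally, since $\left\langle \frac{x}{|x|},\nu\right\rangle \leq 1$ pointwise and $f_{1,1}^2\geq 0$, I drop the normal factor through $\int_{\partial\Omega_{out}} f_{1,1}^2\,dS \geq \int_{\partial\Omega_{out}} f_{1,1}^2\left\langle \frac{x}{|x|},\nu\right\rangle dS$, which closes the chain and gives \eqref{inequality 5}.

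The main obstacle is that $f_{1,1}^2$ is not monotone on $[1,\infty)$: it decreases near $r=1$ and grows like $r^2$ at infinity, so one cannot compare $\int_{\partial\Omega_{out}}f_{1,1}^2$ and $\int_{\partial B_L}f_{1,1}^2$ by rearranging $f_{1,1}^2$ directly against the volume constraint. Passing through the divergence theorem is exactly what trades $f_{1,1}^2$ for its radial divergence $G$, which Lemma \ref{decreasing lemma} guarantees to be monotone; this is the step that makes the comparison go through. The only other point needing care is the orientation of the inner normal on $\partial B_1$, ensuring that those contributions cancel identically between the two domains.
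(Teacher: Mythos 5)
Your proof is correct and is essentially the paper's argument recast via the divergence theorem: the paper reaches the same quantity $\int_\Omega G\,dV + f_{1,1}^2(1)|S^{n-1}|$ by parametrizing the outer boundary radially and discarding a factor $\sec\theta\ge 1$, which is exactly your step of discarding $\langle x/|x|,\nu\rangle\le 1$. The remaining ingredients --- monotonicity of $G$ from Lemma \ref{decreasing lemma}, the equal-volume comparison of $\int_\Omega G$ with $\int_{\Omega_0}G$, and the exact identity on the annulus --- coincide with the paper's.
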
 
\begin{proof}
     Let $B= \{ R_u u \,| \,\, u\in
        S^{n-1}\}$, where $S^{n-1}$ is $(n-1)$ dimensional unit sphere and $R_u= sup\{r \ | \ r u\in \partial \Omega \}$. Then 
        \begin{align*}
             \int_{\partial {\Omega_{out}}} f_{1,1}^2(r) dS & \geq \int_{B} f_{1,1}^2(r) dS \\
             &= \int_{u \in {S} ^{n-1}} f_{1,1}^2(R_u) sec(\theta) R_u^{n-1} du  \\
             & \geq \int_{ u\in {S} ^{n-1}} f_{1,1}^2(R_u) R_u^{n-1} du \\
             & = \int_{u \in S^{n-1}}\int_{1}^{R_u} \left ( 2 f_{1,1}(r) f'_{1,1}(r) r^{n-1} + f_{1,1}^2(r) (n-1) r^{n-2} \right) dr du +f_{1,1}^2(1)|S^{n-1}|\\
             & = \int_{u\in {S}^{n-1}}\int_{1}^{R_u} \left( 2 f_{1,1}(r) f'_{1,1}(r)+ f_{1,1}^2(r) \frac{(n-1)}{r}  \right)r^{n-1} dr du + f_{1,1}^2(1)|S^{n-1}|\\
             & \geq \int_{\Omega} \left( 2 f_{1,1}(r) f'_{1,1}(r) + f_{1,1}^2(r) \frac{(n-1)}{r}\right) dV +f_{1,1}^2(1)|S^{n-1}| \\
             &=\int_{\Omega}G(r)dV+f_{1,1}^2(1)|S^{n-1}|.
        \end{align*}
        
        where $G(r) = \left( 2 f_{1,1}(r) f'_{1,1}(r)  + f_{1,1}^2(r) \frac{(n-1)}{r} \right)$ is defined as in lemma (\ref{decreasing lemma}). Since $G$ is an increasing function of $r$, we have
        \begin{equation} \label{inequality 3}
            G(r) < G(L) \,\,\, \text{in}\,\,  \Omega_0 \backslash (\Omega \cap \Omega_0), 
           ~\mbox{ and } ~~ G(r) > G(L)\,\, \text{in} \,\,\,  \Omega \backslash (\Omega \cap \Omega_0).
       \end{equation}
        Now 
         \begin{align*}
            \int_ { \Omega} G(r) dV +f_{1,1}^2(1)|S^{n-1}|& = \int_{\Omega \cap \Omega_0} G(r) dV+ \int_{\Omega \backslash (\Omega \cap \Omega_0)} G(r) dV +f_{1,1}^2(1)|S^{n-1}|\\
            & =  \int_{ \Omega_0} G(r) dV  - \int_{ \Omega_0 \backslash (\Omega \cap \Omega_0)} G(r) dV + \int_{ \Omega \backslash (\Omega \cap \Omega_0)} G(r) dV +f_{1,1}^2(1)|S^{n-1}|.
        \end{align*} 
        Thus, from inequality (\ref{inequality 3}), we get 
        \begin{align*}
            \int_{ \Omega} G(r) dV +f_{1,1}^2(r)|S^{n-1}| \geq \int_{\Omega_0} G(r) dV  - \int_{\Omega_0 \backslash (\Omega \cap \Omega_0)} G(L) dV + \int_{\Omega \backslash (\Omega \cap \Omega_0)} G(L) dV +f_{1,1}^2(1)|S^{n-1}|.
        \end{align*}
         Since Vol$(\Omega_0 \backslash (\Omega \cap \Omega_0))$ = Vol$(\Omega \backslash (\Omega \cap \Omega_0))$, we get
         \begin{align*}
             \int_{\Omega} G(r) dV +f_{1,1}^2(r)|S^{n-1}| & \geq  \int_{\Omega_0} G(r) dV +f_{1,1}^2(1)|S^{n-1}| \\
              &= \int_{\Omega_0}  \left( 2 f_{1,1}(r) f'_{1,1}(r)  + f_{1,1}^2(r) \frac{(n-1)}{r}\right) dV  +f_{1,1}^2(1)|S^{n-1}|\\
             & = \int_{u \in S^{n-1}} \int_{r=1}^{L} \left( 2 f_{1,1}(r) f'_{1,1}(r)  + f_{1,1}^2(r) \frac{(n-1)}{r}  \right) r^{n-1} drdu+f_{1,1}^2(1)|S^{n-1}|\\
             &= \int_{u\in S^{n-1}} \Big[f_{1,1}^2(r)r^{n-1} \Big]_{1}^L du + f_{1,1}^2(1)|S^{n-1}| \\
             & = \int_{u\in S^{n-1}} \left(f_{1,1}^2(L) L^{n-1} - f_{1,1}^2(1)\right) du + f_{1,1}^2(1)|S^{n-1}| \\
             & = \int_{\partial B_{L}} f_{1,1}^2(r) dS.
         \end{align*}
         Thus,
             $\displaystyle \int_{\partial {\Omega_{out}} }f_{1,1}^2(r) dS \geq \int_{\partial B_{L}} f_{1,1}^2(r) dS$.
\end{proof}
\begin{cor} \label{cor: integral4}
  The following inequality holds for $ \Omega, \Omega_0 $ and $f_{1,1}$:
    \begin{equation}
        \displaystyle \int_{\partial {\Omega }}f_{1,1}^2(r) dS \geq \int_{\partial \Omega_0} f_{1,1}^2(r) dS
    \end{equation}
\end{cor}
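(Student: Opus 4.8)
The plan is to reduce the inequality directly to Lemma \ref{integral6}, exploiting the fact that $\Omega$ and $\Omega_0$ share the same inner boundary $\partial B_1$. First I would record that since $\bar{B_1} \subset \Omega_{out}$ and $\bar{B_1} \subset B_L$ (because $L > 1$), the two boundaries split as disjoint unions
\begin{align*}
\partial \Omega = \partial \Omega_{out} \sqcup \partial B_1, \qquad \partial \Omega_0 = \partial B_L \sqcup \partial B_1.
\end{align*}
Consequently the two surface integrals decompose as
\begin{align*}
\int_{\partial \Omega} f_{1,1}^2(r)\, dS &= \int_{\partial \Omega_{out}} f_{1,1}^2(r)\, dS + \int_{\partial B_1} f_{1,1}^2(r)\, dS, \\
\int_{\partial \Omega_0} f_{1,1}^2(r)\, dS &= \int_{\partial B_L} f_{1,1}^2(r)\, dS + \int_{\partial B_1} f_{1,1}^2(r)\, dS.
\end{align*}

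Next I would observe that the inner contribution $\int_{\partial B_1} f_{1,1}^2(r)\, dS$ is literally the same quantity in both expressions: on $\partial B_1$ one has $r = 1$, so the integrand reduces to the constant $f_{1,1}^2(1)$ and the integral equals $f_{1,1}^2(1)\,|S^{n-1}|$ in each case. Hence this term cancels upon comparison, and the asserted inequality is equivalent to
\begin{align*}
\int_{\partial \Omega_{out}} f_{1,1}^2(r)\, dS \geq \int_{\partial B_L} f_{1,1}^2(r)\, dS,
\end{align*}
which is precisely the conclusion of Lemma \ref{integral6}. Adding the common inner term back to both sides then yields the claim.

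There is essentially no genuine obstacle in this corollary: it is a bookkeeping consequence of Lemma \ref{integral6}, in which all the analytic content has already been established — namely the monotonicity of $G$ from Lemma \ref{decreasing lemma} and the volume-balancing of $G(r) - G(L)$ across the symmetric difference sets $\Omega_0 \setminus (\Omega \cap \Omega_0)$ and $\Omega \setminus (\Omega \cap \Omega_0)$, which both have equal volume by the normalization $\mathrm{Vol}(\Omega) = \mathrm{Vol}(\Omega_0)$. The only detail worth verifying explicitly is the clean splitting of the boundaries, and this is immediate from the containments $\bar{B_1} \subset \Omega_{out}$ and $1 < L$.
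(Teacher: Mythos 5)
Your proof is correct and matches what the paper intends: the corollary is stated without proof immediately after Lemma \ref{integral6}, precisely because it follows by splitting $\partial\Omega = \partial\Omega_{out}\sqcup\partial B_1$ and $\partial\Omega_0 = \partial B_L\sqcup\partial B_1$ and cancelling the common inner term $f_{1,1}^2(1)\,|S^{n-1}|$, exactly as you argue.
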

We will use the following proposition to conclude some integral identities related to function $f_{1,1}(r)$, see Corollary \ref{cor: f_{1,1}}.
\begin{proposition} \label{prop:integral expression}
 Let $g : (0, \infty) \rightarrow \mathbb{R}$ be a smooth positive radial function of $r$. Let $W$  be a bounded smooth domain in $\mathbb{R}^n$ with smooth boundary $\partial W$.
 \begin{enumerate}
     \item If $W$ is symmetric of order 2 with respect to the origin, then for each $i=1,2,\ldots n$, we have
     \begin{enumerate}
         \item $\displaystyle \int_{x \in W}   g(r) x_i\, dV = 0$,  \label{eqn:integral 1}~~~
       \item $\displaystyle \int_{x \in \partial W}   g(r) x_i\, dS = 0$.  \label{eqn: integral 3}.
        \end{enumerate}
\item If $W$ is  symmetric of order 4 with respect to the origin, then for each $i, j=1,2,\dots n$, $i\neq j$,  we have
\begin{enumerate}
     \item $\displaystyle \int_{ x \in W} g(r) x_i x_j \, dV = 0$, \label{eqn:integral 2}
      \item $\displaystyle \int_{x \in \partial W}  g(r) x_i x_j \,dS = 0$.  \label{eqn: integral 4}
        \end{enumerate}
 \end{enumerate}
 Here $(x_1, x_2, \ldots, x_n)$ represents Cartesian coordinates of a point $x \in \mathbb{R}^{n}$ with respect to the origin.
\end{proposition}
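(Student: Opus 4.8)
The plan is to prove each of the four identities by a single symmetry argument: I will exhibit the integrand as an odd function under a rotation $R$ about the origin that preserves $W$. The mechanism is the same in every case. Since $R$ is orthogonal it preserves both Lebesgue measure and the radius $r=|x|$, so for any integrable $\phi$ the invariance $R(W)=W$ gives $\int_W \phi\,dV = \int_W (\phi\circ R)\,dV$ by the change-of-variables formula (the Jacobian factor being $|\det R| = 1$); moreover, because $g$ is radial, $g(r)$ is unaffected by $R$. If the chosen $R$ makes $\phi\circ R = -\phi$, these two facts force $\int_W \phi\,dV = 0$.

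For part (1), fix $i$ and pick any $j\neq i$, which is possible since $n\geq 2$. Take $R = R_{i,j}^{\pi}$, the order-$2$ generator, acting by $(x_i,x_j)\mapsto(-x_i,-x_j)$ and fixing the remaining coordinates. With $\phi(x) = g(r)\,x_i$ one has $\phi\circ R = -\phi$, so $\int_W g(r)\,x_i\,dV = 0$. The boundary statement (b) is identical: $R(W)=W$ forces $R(\partial W)=\partial W$, and $R$ preserves the $(n-1)$-dimensional surface measure $dS$, so the same oddness gives $\int_{\partial W} g(r)\,x_i\,dS = 0$.

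For part (2), fix $i\neq j$ and take $R = R_{i,j}^{\pi/2}$, which is exactly the order-$4$ generator and acts by $(x_i,x_j)\mapsto(-x_j,x_i)$. For $\phi(x) = g(r)\,x_i x_j$ this yields $\phi\circ R = g(r)(-x_j)(x_i) = -\phi$, and the argument above delivers both $\int_W g(r)\,x_i x_j\,dV = 0$ and its boundary analogue (b). It is worth recording why the hypothesis must be strengthened from order $2$ to order $4$ here: under $R_{i,j}^{\pi}$ the product $x_i x_j$ is left invariant rather than negated, so only the finer order-$4$ symmetry produces the required sign change.

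There is no serious obstacle in this argument; the only care required is bookkeeping — correctly reading off the action of the chosen rotation on the relevant coordinates, and invoking the invariance of $r$, $dV$ and $dS$ under orthogonal maps so that everything except the coordinate-induced sign cancels.
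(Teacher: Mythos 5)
Your argument is correct: the change-of-variables identity $\int_W \phi\,dV=\int_W(\phi\circ R)\,dV$ for a measure- and domain-preserving rotation $R$, combined with the oddness of $g(r)x_i$ under $R_{i,j}^{\pi}$ and of $g(r)x_ix_j$ under $R_{i,j}^{\pi/2}$, is exactly what is needed, and the same reasoning passes to $\partial W$ since an orthogonal map preserves the induced surface measure. The paper does not reproduce a proof here but defers to \cite{basak2023sharp}, where the argument is this same symmetry computation, so your proposal matches the intended proof; your remark explaining why order~$2$ is insufficient for the bilinear case is a correct and worthwhile observation.
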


\begin{cor} \label{cor: f_{1,1}}
    Let $\Omega$ and $f_{1,1}$ be defined as in the beginning of this section. Then for each $i,j=1,2,\dots n$, $i\neq j$ we have 
    \begin{enumerate}
        \item $\displaystyle \int_{x\in \partial \Omega}f_{1,1}(r) \frac{f_{1,1}(r)}{r}x_i\, dS = 0,$ \label{proof (i)}\\
         \item $\displaystyle \int_{x \in \partial \Omega}  \frac{f_{1,1}(r)}{r}x_i. \frac{f_{1,1}(r)}{r}x_j \,dS=0,$
        \item $\displaystyle \int_{ x \in \Omega} \left< \nabla f_{1,1}(r), \nabla \left(\frac{f_{1,1}(r)}{r}x_i \right)\right> dV=0,$ 
        \item $\displaystyle \int_{x \in \Omega} \left< \nabla \left(\frac{f_{1,1}(r)}{r} x_i \right) , \nabla \left(\frac{f_{1,1}(r)}{r}x_j \right)\right> dV=0$. 
    \end{enumerate}
\end{cor}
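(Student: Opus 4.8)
The plan is to reduce each of the four integrands to a radial function multiplied by either $x_i$ or $x_i x_j$ (with $i \neq j$), and then invoke Proposition \ref{prop:integral expression}. The crucial structural fact is that $\Omega = \Omega_{out}\setminus\overline{B_1}$ inherits the order-$4$ symmetry of $\Omega_{out}$: since $B_1$ is a ball centered at the origin it is invariant under every rotation, so each defining rotation $R_{i,j}^{2\pi/4}$ maps $\Omega$ onto itself. Moreover, order-$4$ symmetry implies order-$2$ symmetry, because the rotation by $\pi$ is the square of the rotation by $\pi/2$. Hence both parts of Proposition \ref{prop:integral expression} are available for $W=\Omega$, for boundary integrals as well as volume integrals. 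Throughout, I write $\phi(r)=f_{1,1}(r)/r$; both $f_{1,1}$ and $\phi$ are smooth radial functions on $\overline{\Omega}$ and $\partial\Omega$ since $r\geq 1$ there, so no singularity at the origin intervenes.

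Identities (1) and (2) are then immediate. For (1) the integrand equals $\frac{f_{1,1}^2(r)}{r}\,x_i$, a radial function times $x_i$, so the order-$2$ boundary identity \eqref{eqn: integral 3} yields the vanishing. For (2) the integrand equals $\frac{f_{1,1}^2(r)}{r^2}\,x_i x_j$ with $i\neq j$, and the order-$4$ boundary identity \eqref{eqn: integral 4} applies.

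For (3) and (4) I first compute the gradients. Since $f_{1,1}$ is radial, $\nabla f_{1,1}(r)=f_{1,1}'(r)\frac{x}{r}$, and by the product rule $\nabla\big(\phi(r)x_i\big)=\phi'(r)\frac{x_i}{r}\,x+\phi(r)e_i$, where $e_i$ is the $i$-th coordinate vector. Using $\langle x,x\rangle=r^2$, $\langle x,e_i\rangle=x_i$, and $\langle e_i,e_j\rangle=0$ for $i\neq j$, a short expansion collapses the cross terms to give
\begin{align*}
\left\langle \nabla f_{1,1}(r),\, \nabla\Big(\tfrac{f_{1,1}(r)}{r}x_i\Big)\right\rangle &= x_i\, f_{1,1}'(r)\Big(\phi'(r)+\tfrac{\phi(r)}{r}\Big),\\
\left\langle \nabla\Big(\tfrac{f_{1,1}(r)}{r}x_i\Big),\, \nabla\Big(\tfrac{f_{1,1}(r)}{r}x_j\Big)\right\rangle &= x_i x_j\Big((\phi'(r))^2+2\,\phi'(r)\tfrac{\phi(r)}{r}\Big).
\end{align*}
In each case the bracketed coefficient is radial, so (3) follows from the order-$2$ volume identity \eqref{eqn:integral 1} and (4) from the order-$4$ volume identity \eqref{eqn:integral 2}.

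The only subtlety is that the radial coefficients in (3) and (4) need not be positive, whereas Proposition \ref{prop:integral expression} is stated for positive radial $g$. This is harmless: the vanishing there comes purely from the rotation change-of-variables, which is linear in $g$ and insensitive to its sign, so the conclusion holds for any smooth radial coefficient (if one prefers, split the coefficient into positive and negative parts and apply the proposition to each). Thus the main work is the elementary gradient bookkeeping that collapses the inner products into radial multiples of $x_i$ and $x_i x_j$; once that is in place, the symmetry of $\Omega$ does the rest.
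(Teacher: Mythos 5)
Your argument is correct and is exactly the intended route: each integrand reduces to a radial coefficient times $x_i$ or $x_ix_j$, after which Proposition \ref{prop:integral expression} (whose conclusion is linear in $g$, so the positivity hypothesis is immaterial, e.g.\ by writing the coefficient as a difference of two positive smooth radial functions) gives the vanishing; the paper itself defers the details to \cite{basak2023sharp} but frames the corollary as precisely this consequence of the proposition. Your gradient computations and the observation that $\Omega=\Omega_{out}\setminus\overline{B_1}$ inherits the order-$4$ (hence order-$2$) symmetry are both accurate.
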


\begin{lemma} \label{lem:integral1}
    Let $W \subset \mathbb{R}^n$ be an open bounded smooth domain having symmetry of order $4$ with respect to the origin. Let $\Phi(r)$ be a positive radial function on $\mathbb{R}^n$. Then, there exists a constant $A>0$ such that
    \begin{equation}
         \int_{W} \Phi(r) x_i^2\,\, dV= A\,\, \text{for all}\, \,i\in \{1,2 \dots n\}.
    \end{equation}
\end{lemma}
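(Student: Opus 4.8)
The plan is to exploit the order-$4$ rotational symmetry of $W$ to show that the $n$ integrals $\int_{W} \Phi(r)\, x_i^2\, dV$ all coincide, and then to define $A$ as their common value.

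Fix a pair of indices $i \neq j$. The key step is a change of variables by the rotation $R = R_{i,j}^{\pi/2}$ appearing in the definition of order-$4$ symmetry. Since this is an anticlockwise rotation by $\pi/2$ in the coordinate plane $(x_i, x_j)$, the substitution $x = R y$ gives $x_i = -y_j$, $x_j = y_i$, and leaves the remaining coordinates unchanged; in particular $x_i^2 = y_j^2$. The map $R$ is orthogonal, so it has Jacobian $1$ and preserves the radial variable, $|x| = |y|$, whence $\Phi(|x|) = \Phi(|y|)$. Finally, the symmetry hypothesis $R(W) = W$ (equivalently $R^{-1}(W) = W$) guarantees that $y$ ranges over $W$ precisely when $x = R y$ does.

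Combining these observations in the integral yields
\begin{align*}
\int_{W} \Phi(r)\, x_i^2 \, dV = \int_{W} \Phi(|y|)\, y_j^2 \, dy = \int_{W} \Phi(r)\, x_j^2 \, dV,
\end{align*}
so the value of the integral is independent of the chosen index. Applying this to every pair $i \neq j$ shows that all $n$ integrals equal a single constant, which I define to be $A := \int_{W} \Phi(r)\, x_1^2 \, dV$. Positivity of $A$ is immediate: $\Phi$ is positive and $x_1^2 \geq 0$, with $x_1^2 > 0$ on a subset of positive measure of the nonempty open set $W$, so $A > 0$.

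There is no serious obstacle here; the only point requiring care is bookkeeping the precise action of $R$ on coordinates (the sign flip $x_i = -y_j$ is harmless since we integrate $x_i^2$), together with recording that order-$4$ symmetry already supplies invariance of $W$ under each planar rotation $R_{i,j}^{\pi/2}$, which is exactly the invariance the substitution requires.
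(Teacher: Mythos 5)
Your argument is correct and is essentially the canonical one: the paper itself defers the proof of this lemma to \cite{basak2023sharp}, where the same change of variables under the rotation $R_{i,j}^{\pi/2}$ (Jacobian $1$, preservation of $r$ and of $W$, interchange of $x_i^2$ and $x_j^2$) is used to show all $n$ integrals coincide. Your bookkeeping of the coordinate action and the positivity of $A$ are both in order, so nothing further is needed.
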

  
 For a detailed proof of Proposition \ref{prop:integral expression}, Corollary \ref{cor: f_{1,1}} and Lemma \ref{lem:integral1}, see (\cite{basak2023sharp}).

\section{Steklov eigenvalue problem on symmetric doubly connected domain} \label{sec: iso. bound}
Let $\Omega_{out} \subset \mathbb{R}^n$ be a connected open bounded smooth domain with symmetry of order 4 centered at the origin. Let $B_1$ be a unit ball centered at the origin such that $ \Bar{ B_1 }\subset \Omega_{out}$. 
Consider the following Steklov eigenvalue problem on $\Omega = \Omega_{out} \backslash \Bar{B_1}$.
\begin{align} 
    \begin{cases} 
        \Delta u =0 \quad in \quad \Omega, \\
        \frac{\partial u}{\partial \nu}= \sigma u \quad on \quad \partial \Omega.
    \end{cases} \label{problem on annular domain }
\end{align} 
For $k \in \mathbb{N}, \sigma_{k}$ admits the following variational characterization
\begin{align}  \label{variational characterization}
\sigma_{k}(\Omega)=\min_{E\in \mathcal{H}_{k+1}(\Omega)} \max_{u(\neq 0) \in E} R(u),
\end{align} 

where $\mathcal{H}_{k+1}(\Omega)$ is the collection of all $(k+1)$ dimensional subspace of the sobolev space $H^1(\Omega)$ and $R(u):= \frac{\displaystyle \int_{\Omega} \| \nabla u \|^2 dV}{\displaystyle \int_{\partial{\Omega}} \|u\|^2 dS}$.\\

Now we prove the following theorem for the first $n$ nonzero Steklov eigenvalues.
\begin{theorem} \label{thm:isoperimetric}
    Let $\Omega$ be a connected open bounded smooth domain defined as above, and $\sigma_{k}$ is the $k$th eigenvalue of (\ref{problem on annular domain }) on $\Omega$. Then for $1 \leq k \leq n$, 
    \begin{equation}
        \sigma_{k}(\Omega) \leq \sigma_{k}(\Omega_0) = \sigma_1 (\Omega_0), 
    \end{equation} 
    where $\Omega_0 = B_L \backslash \Bar{ B_1}$ and $B_L$ is a ball of radius $L$ centered at the origin such that $Vol(B_L)= Vol(\Omega_{out})$ i.e, $Vol(\Omega)= Vol(\Omega_0)$.
    \end{theorem}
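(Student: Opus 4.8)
The plan is to apply the min-max characterization \eqref{variational characterization} to an explicit test space modeled on the $\sigma_1(\Omega_0)$-eigenspace, and to transfer every integral from $\Omega$ onto $\Omega_0$ using the comparison results of Section \ref{sec:int. ineq.}. Since $\sigma_1(\Omega_0)=\sigma_{1,1}$ has multiplicity $n$ (Lemma \ref{lem:Illias}), we have $\sigma_k(\Omega_0)=\sigma_1(\Omega_0)$ for every $1\le k\le n$, and as $k\mapsto\sigma_k(\Omega)$ is nondecreasing it suffices to show $\sigma_k(\Omega)\le\sigma_{1,1}$ for each such $k$. The natural test functions are $\phi_i(x)=f_{1,1}(r)\,x_i/r$ for $i=1,\dots,n$, with $f_{1,1}$ extended radially to $[1,\infty)$; on $\Omega_0$ these are exactly the Steklov eigenfunctions for $\sigma_{1,1}$ (the radial part $f_{1,1}$ times a degree-one spherical harmonic $x_i/r$). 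I would feed the $(k+1)$-dimensional subspace $E=\mathrm{span}\{1,\phi_1,\dots,\phi_k\}\subset H^1(\Omega)$ into \eqref{variational characterization}.

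Next I would diagonalize both the Dirichlet and the boundary quadratic forms on $E$. For $u=c_0+\sum_i c_i\phi_i$, the mixed gradient terms $\int_\Omega\langle\nabla\phi_i,\nabla\phi_j\rangle\,dV$ ($i\ne j$) vanish by Corollary \ref{cor: f_{1,1}}(4), the constant contributes nothing to $\nabla u$, the mixed boundary terms $\int_{\partial\Omega}\phi_i\phi_j\,dS$ ($i\ne j$) vanish by Corollary \ref{cor: f_{1,1}}(2), and the constant decouples on the boundary since $\int_{\partial\Omega}\phi_i\,dS=0$ by Proposition \ref{prop:integral expression}(1). Hence $R(u)=\dfrac{\sum_{i} c_i^2\int_\Omega|\nabla\phi_i|^2\,dV}{c_0^2|\partial\Omega|+\sum_{i} c_i^2\int_{\partial\Omega}\phi_i^2\,dS}$, and dropping the nonnegative term $c_0^2|\partial\Omega|$ from the denominator leaves a weighted average of the ratios $\int_\Omega|\nabla\phi_i|^2\big/\int_{\partial\Omega}\phi_i^2$.

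The decisive step is that, by the order-$4$ symmetry of $\Omega$, both $N:=\int_\Omega|\nabla\phi_i|^2\,dV$ and $D:=\int_{\partial\Omega}\phi_i^2\,dS$ are independent of $i$: the first follows from Lemma \ref{lem:integral1} applied to the radial coefficient multiplying $x_i^2$ in the expansion of $|\nabla\phi_i|^2$, and the second from the same rotation-by-$\pi/2$ isometry, which preserves $\partial\Omega$, the radius $r$, and the surface measure while permuting the coordinates. Then the weighted average collapses to $N/D$, so $R(u)\le N/D$ for every $u\in E$. A direct pointwise computation gives $\sum_{i=1}^n|\nabla\phi_i|^2=F(r)$ and $\sum_{i=1}^n\phi_i^2=f_{1,1}^2(r)$, with $F$ as in Lemma \ref{decreasing lemma}, so $nN=\int_\Omega F\,dV$ and $nD=\int_{\partial\Omega}f_{1,1}^2\,dS$, whence $N/D=\int_\Omega F\,dV\big/\int_{\partial\Omega}f_{1,1}^2\,dS$. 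Now Lemma \ref{lem:integral2} bounds the numerator above by $\int_{\Omega_0}F\,dV$ and Corollary \ref{cor: integral4} bounds the denominator below by $\int_{\partial\Omega_0}f_{1,1}^2\,dS$, while on $\Omega_0$ the eigenfunction identity $\int_{\Omega_0}F\,dV=\sigma_{1,1}\int_{\partial\Omega_0}f_{1,1}^2\,dS$ holds. Chaining these yields $N/D\le\sigma_{1,1}$, hence $R(u)\le\sigma_{1,1}=\sigma_k(\Omega_0)$ for all $u\in E$, and \eqref{variational characterization} gives $\sigma_k(\Omega)\le\sigma_k(\Omega_0)$.

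I expect the main obstacle to be precisely the per-index equality $N_i\equiv N$, $D_i\equiv D$ above, rather than any single inequality. The comparison lemmas only control the summed quantities $\int_\Omega F$ and $\int_{\partial\Omega}f_{1,1}^2$, and a summed estimate alone cannot bound $\max_{u\in E}R(u)$, because one anomalously large ratio $N_i/D_i$ could survive the averaging. The order-$4$ symmetry is exactly what forces all the $N_i$ (and all the $D_i$) to coincide, so that the comparison of sums upgrades to a bound on each ratio; this is the only place where the symmetry hypothesis is genuinely used, which is consistent with the examples of Section \ref{Counter examples} showing that it cannot be dropped.
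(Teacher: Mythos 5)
Your proposal is correct and follows essentially the same route as the paper: the same test functions $f_{1,1}(r)x_i/r$, the same decoupling of cross terms via Corollary~\ref{cor: f_{1,1}} and Proposition~\ref{prop:integral expression}, the same use of the order-$4$ symmetry through Lemma~\ref{lem:integral1} to make the per-index ratios equal, and the same final chain through Lemma~\ref{lem:integral2} and Corollary~\ref{cor: integral4}. The only (harmless) deviation is that you span the extra dimension with the constant function $1$ rather than with $f_{1,1}(r)$, which lets you simply drop the $c_0^2\,|\partial\Omega|$ term from the denominator instead of comparing the two Rayleigh quotients $\int_\Omega \|\nabla f_{1,1}\|^2/\int_{\partial\Omega}f_{1,1}^2$ and $A_2/A_1$ as the paper does.
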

    \begin{proof}
        Since $\sigma_k(\Omega_0)=\sigma_1(\Omega_0), 1\leq k \leq n $, so it is enough to prove that $\sigma_n(\Omega) \leq \sigma_n(\Omega_0)=\sigma_1(\Omega_0)$. Consider the following $(n+1)$ dimensional subspace of $H^1(\Omega)$,
        \begin{align*}
            E=span\{ f_{1,1}(r), \frac{f_{1,1}(r)}{r}x_1, \dots \frac{f_{1,1}(r)}{r}x_n \}, 
        \end{align*}
        where $f_{1,1}(r)$ is define in (\ref{eqn: eigenfunction ODE}) with $l=1$ and $i=1$. For any $u \in E \backslash \{0\}$, there exist $c_0, c_1, \dots c_n \in \mathbb{R}$ not simultaneously equal to zero, such that 
        \begin{align*}
            u= c_0 f_{1,1}(r) + c_1 \frac{f_{1,1}(r)}{r}x_1 + \dots + c_n\frac{f_{1,1}(r)}{r}x_n.
        \end{align*}
        Then by using Corollary \ref{cor: f_{1,1}}, we get 
        \begin{align}
            \frac{\displaystyle \int_\Omega \| \nabla u \|^2 dV}{\displaystyle \int_{\partial {\Omega}} u^2 dS}= \frac{c_0^2 \displaystyle \int_\Omega \|\nabla f_{1,1}(r)\|^2 \, dV+ \displaystyle \sum_{i=1}^nc_i^2 \displaystyle \int_\Omega \bigg \| \nabla \left( \frac{f_{1,1}(r)}{r}  x_i \right) \bigg \|^2 \, dV}{c_0^2 \displaystyle \int_{\partial {\Omega}}f_{1,1}^2(r) \, dS + \displaystyle \sum_{i=1}^n c_i^2 \displaystyle \int_{\partial {\Omega}} \frac{f_{1,1}^2(r)}{r^2}x_i^2 \, dS}. \label{equality}
        \end{align}
        According to Lemma \ref{lem:integral1} there are constants $A_1, A_2 > 0 $ such that for all natural numbers $ 1\leq i \leq n$,
         \begin{align*}
       \int_{\partial {\Omega}}\left( \frac{f_{1,1}(r)}{r}x_i \right)^2 dS & =\int_{\partial {\Omega}} \frac{f_{1,1}^2(r)}{r^2}x_i^2 dS = A_1,\\
       \int_\Omega \bigg \| \nabla \left( \frac{f_{1,1}(r)}{r} x_i \right) \bigg \|^2 dV & = \int_\Omega \left( \frac{(f'_{1,1}(r))^2}{r^2} x_i^2 - \frac{f_{1,1}^2(r)}{r^4}x_i^2 + \frac{f_{1,1}^2(r)}{r^2} \right) dV = A_2.
   \end{align*} 
   Therefore
   $$ n\,A_1 = \sum_{i=1}^n \int_{\partial {\Omega}} \left( \frac{f_{1,1}(r)}{r}x_i \right)^2 \, dS = \int_{\partial {\Omega}} f_{1,1}^2(r) \, dS, $$ and
 $$n\,A_2 = \sum_{i=1}^n  \int_\Omega \left(\frac{(f'_{1,1}(r))^2}{r^2} x_i^2-\frac{f_{1,1}^2(r)}{r^4}x_i^2 +\frac{f_{1,1}^2(r)}{r^2} \right) dV =\int_\Omega \left((f'_{1,1}(r))^2 + \frac{(n-1)}{r^2}f_{1,1}^2(r)\right)\, dV.$$
 Thus for all natural numbers $ 1\leq i\leq n,$ we have 
  \begin{equation} \label{sum 1} 
       \displaystyle \int_{\partial {\Omega}} \left( \frac{f_{1,1}(r)}{r}x_i \right)^2 \, dS  = A_1 = \frac{1}{n} \displaystyle \int_{\partial {\Omega}} f_{1,1}^2(r) \, dS.
        \end{equation}
         \begin{equation} \label{sum 1.5} 
        \displaystyle \int_\Omega \bigg \| \nabla \left( \frac{f_{1,1}(r) x_i}{r}\right) \bigg \|^2 \, dV  = A_2 = \frac{1}{n} \displaystyle \int_\Omega \left( (f'_{1,1}(r))^2 + \frac{(n-1)}{r^2} f_{1,1}^2(r)\right) \, dV.
         \end{equation}
         Now, from (\ref{equality}), \eqref{sum 1} and  \eqref{sum 1.5}, we get 
   \begin{align}
       \frac{\displaystyle \int_\Omega \| \nabla u \|^2 dV}{\displaystyle \int_{\partial {\Omega}} u^2 \, dS} = \frac{c_0^2 \displaystyle \int_\Omega \|\nabla f_{1,1}(r)\|^2 \, dV + A_2 \displaystyle \sum_{i=1}^n c_i^2}{c_0^2 \displaystyle \int_{\partial {\Omega}}f_{1,1}^2(r) \, dS + A_1 \displaystyle \sum_{i=1}^n c_i^2 }
        \leq \text{max} \Bigg \{\frac{\displaystyle \int_\Omega \|\nabla f_{1,1}(r)\|^2 \, dV}{\displaystyle \int_{\partial {\Omega}}f_{1,1}^2(r) \, dS}, \frac{A_2}{A_1} \Bigg \}. \label{gradient inequality 1}
   \end{align}
   Now 
\begin{align*}
    \frac{A_2}{A_1} & = \frac{\displaystyle \int_\Omega \left( (f'_{1,1}(r))^2 + \frac{(n-1)}{r^2} f_{1,1}^2(r)\right) dV}{\displaystyle \int_{\partial {\Omega}} f_{1,1}^2(r) \, dS} 
                     \geq \frac{\displaystyle \int_\Omega (f'_{1,1}(r))^2 \, dV}{\displaystyle \int_{\partial {\Omega}} f_{1,1}^2(r) \, dS} 
                     = \frac{\displaystyle \int_\Omega \|\nabla f_{1,1}(r)\|^2 \, dV}{\displaystyle \int_{\partial {\Omega}}f_{1,1}^2(r) \, dS}.
\end{align*}
   Then from the inequality (\ref{gradient inequality 1}) we get
   \begin{align}
       \frac{\displaystyle \int_\Omega \| \nabla u \|^2 dV}{\displaystyle \int_{\partial {\Omega}} u^2 \, dS} \leq  \frac{A_2}{A_1} =\frac{\displaystyle \int_\Omega \left( (f'_{1,1}(r))^2 + \frac{(n-1)}{r^2} f_{1,1}^2(r)\right) \, dV }{\displaystyle \int_{\partial {\Omega}} f_{1,1}^2(r) \, dS}. \label{gradient inequality}
    \end{align}
    Next, using the Lemma \ref{lem:integral2} and Corollary \ref{cor: integral4}, we get  
   \begin{align*}
       \frac{ \displaystyle \int_\Omega \left( (f_{1,1}'(r))^2 + \frac{(n-1)}{r^2} f_{1,1}^2(r)\right) \, dV }{\displaystyle \int_{\partial {\Omega}} f_{1,1}^2(r) \, dS} \leq \frac{\displaystyle \int_{\Omega_{0}} \left( (f_{1,1}'(r))^2 + \frac{(n-1)}{r^2} f_{1,1}^2(r)\right ) \, dV }{\displaystyle \int_{\partial \Omega_0}f_{1,1}^2(r) \, dS} = \sigma_1(\Omega_0).
\end{align*}
Therefore, from the variational characterization (\ref{variational characterization}) and inequality (\ref{gradient inequality}), we conclude,
   \begin{align*}
      \sigma_{n}(\Omega) \leq \max_{u(\neq 0) \in E} \frac{\displaystyle \int_\Omega \| \nabla u \|^2 \, dV}{\displaystyle \int_{\partial {\Omega}} u^2 \, dS} \leq \sigma_1(\Omega_0).
   \end{align*}
    \end{proof}

    \section{Mixed Steklov Neumann eigenvalue problem } \label{sec:SN problem}
  In this section, we study the mixed Steklov Neumann problem on doubly connected domains. Throughout this section, we consider the following notations: Let $\Omega_{out}$ denote a connected smooth bounded domain in $\mathbb{R}^n$ centered at the origin having symmetric of order $4$ and $B_{R_1}$ is a ball of radius $R_1$ in $\mathbb{R}^n$ centered at the origin such that $\bar{B}_{R_1} \subset \Omega_{out}$. Take $\tilde{\Omega}_0$ as a concentric annular domain in $\mathbb{R}^{n}$. We consider the following Steklov Neumann eigenvalue problem

  \begin{align} \label{SNproblem1}
    \begin{cases}
        \Delta u = 0 \,\, \text{in}\,\,\, \tilde{\Omega}=\Omega_{out}\backslash \bar{B}_{R_1}, \\
     \frac{\partial u}{\partial \nu} = 0 \,\, \text{on}\,\,\, \partial B_{R_1}, \\
     \frac{\partial u}{\partial \nu} = \mu u \,\, \text{on}\,\,\, \partial \Omega_{out}. 
    \end{cases}
\end{align}
Eigenvalues of problem (\ref{SNproblem1}) are positive and form an increasing sequence 
\begin{align*}
    0=\mu_0(\tilde{\Omega})<\mu_1(\tilde{\Omega}) \leq \mu_2(\tilde{\Omega}) \leq \cdots  \nearrow \infty
\end{align*}
Here eigenvalues are counted with multiplicity.

\subsection{Steklov Neumann Eigenvalue problem on concentric annular domain}

For $R_2 >R_1,$ let $B_{R_1}$ and $B_{R_2}$ be balls in $\mathbb{R}^n$ centered at the origin of radius $R_1$ and $R_2$, respectively. Consider the Steklov Neumann eigenvalue problem on an annular domain $\tilde{\Omega}_0 = B_{R_2} \backslash B_{R_1},$

\begin{align} \label{SN problem}
    \begin{cases}
        \Delta u = 0 \,\, \text{in}\,\,\, \tilde{\Omega}_0, \\
     \frac{\partial u}{\partial \nu} = 0 \,\, \text{on}\,\,\, \partial B_{R_1}, \\
     \frac{\partial u}{\partial \nu} = \mu u \,\, \text{on}\,\,\, \partial B_{R_2}. 
    \end{cases}
\end{align}
In a similar manner as described in Section \ref{sec:annular domain}, using separation of variable technique, any solution of (\ref{SN problem}) is of the form $u=f(r)g(w)$. Here $g(w)$ is an eigenfunction of $\Delta_{\mathbb{S}^{n-1}}$ and $f(r)$ satisfies 
\begin{eqnarray} 
    \begin{array}{ll} \label{SNode}
        -f''(r)-\frac{n-1}{r}f'(r)+ \frac{l(l+n-2)}{r^2}f(r)=0 ~\mbox{ for } ~ r \in (R_1,R_2), l\geq 0, \\
    f'(R_1)=0, \,\, f'(R_2)=\mu f(R_2).
    \end{array}
\end{eqnarray}
By solving this ODE, we find that corresponding to each $l \geq 0$, it has one eigenvalue $\mu_l $ which is given by
\begin{align*}
    \mu_l = \frac{l(l+n-2) \Big((\frac{R_2}{R_1})^{2l+n-2}-1\Big)}{R_2 \Big( (l+n-2)(\frac{R_2}{R_1})^{2l+n-2}+l\Big)}, \quad l \geq 0,
\end{align*}
corresponding to eigenfunction
\begin{align}\label{Neuman eigenfunction}
    \tilde{f}_l(r)=r^l+\frac{lR_1^{2l+n-2}}{(l+n-2)r^{l+n-2}}, \quad l \geq 0.
\end{align}

Hence for each $l\geq 0$, $\mu_l (\tilde{\Omega}_0) = \mu_l$ is an eigenvalue of \eqref{SN problem} with multiplicity equal to the dimension of $H_l$. Eigenfunctions corresponding to eigenvalue $\mu_l (\tilde{\Omega}_0)$ are $\tilde{f}_{l}(r)g_{l}(w)$, where $g_{l}(w)$ is an eigenfunction of $\Delta_{\mathbb{S}^{n-1}}$ corresponding to eigenvalue $l(l+n-2)$ and $\tilde{f}_{l}(r)$ is defined as in \eqref{Neuman eigenfunction}. Also
\begin{align}
  \mu_1 (\tilde{\Omega}_0) = \mu_2 (\tilde{\Omega}_0) = \cdots = \mu_n (\tilde{\Omega}_0) = \frac{\displaystyle \int_{\tilde{\Omega}_{0}} \left( (\tilde{f}_{1}'(r))^2 + \frac{(n-1)}{r^2} \tilde{f}_{1}^2(r)\right ) \, dV }{\displaystyle \int_{\partial B_{R_2}}\tilde{f}_{1}^2(r) \, dS}.
\end{align}

\begin{lemma} \label{SN de(increasing)lemma}
  Let $\tilde{f}_1:[R_1, \infty) \to \mathbb{R}$ be function define in \eqref{Neuman eigenfunction} for $l=1$. Define $\tilde{F}, \tilde{G}:[R_1, \infty) \to \mathbb{R}$ as 
    \begin{align*}
        \tilde{F}(r) &=\left((\tilde{f}'_{1}(r))^2 + \frac{(n-1)}{r^2} \tilde{f}_{1}^2(r)\right) \qquad \text{ and } \\
        \tilde{G}(r) &= \left( 2\tilde{f}_{1}(r)\tilde{f}_{1}'(r) + \frac{n-1}{r}\tilde{f}_{1}^2(r) \right).
    \end{align*}

Then $\tilde{F}$ is a decreasing function of $r$ and $\tilde{G}$ is an increasing function of $r$. 
\end{lemma}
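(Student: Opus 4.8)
The plan is to proceed by direct computation, exactly mirroring the proof of Lemma \ref{decreasing lemma}. The function $\tilde{f}_1$ here has the same structural form as $f_{1,1}$ from the earlier Steklov case, namely $r$ plus a constant multiple of $r^{-(n-1)}$; only the coefficient differs. Indeed, from \eqref{Neuman eigenfunction} with $l=1$ we have $\tilde{f}_1(r) = r + \frac{R_1^{n}}{(n-1)}\frac{1}{r^{n-1}}$, so I would first record this explicit form together with $\tilde{f}_1'(r) = 1 - \frac{R_1^{n}}{r^{n}}$. The key simplification is that the messy coefficient $\frac{1+\sigma_{1,1}}{n-1-\sigma_{1,1}}$ appearing in $f_{1,1}$ is now replaced by the clean constant $\frac{R_1^{n}}{n-1}$, which should make the algebra slightly more transparent.

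First I would substitute these expressions into $\tilde{F}$ and $\tilde{G}$ and expand, obtaining closed forms as finite sums of powers of $r$ (with only nonpositive exponents beyond the leading terms). I expect $\tilde{F}(r)$ to reduce to a constant plus a positive multiple of $r^{-2n}$, and $\tilde{G}(r)$ to be $(n+1)r$ plus lower-order terms in $r^{-(n-1)}$ and $r^{-(2n-1)}$, paralleling the formulas displayed in Lemma \ref{decreasing lemma}. Then I would differentiate. For $\tilde{F}$ the derivative should be manifestly nonpositive, being a negative constant times $r^{-(2n+1)}$, giving $\tilde{F}'(r) \leq 0$ on $[R_1,\infty)$. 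For $\tilde{G}$ I would aim to write $\tilde{G}'(r)$ as a sum of a positive constant and squared or otherwise visibly nonnegative terms; in the earlier lemma the authors achieved this by completing the square, so I would look for the analogous decomposition here, yielding $\tilde{G}'(r) \geq 0$.

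There is essentially no hard conceptual step; the lemma is a calculus verification and the only obstacle is bookkeeping — ensuring the coefficients combine with the correct signs so that $\tilde{G}'$ is genuinely nonnegative rather than merely appearing so. The cleanest way to guarantee this is to arrange $\tilde{G}'(r)$ into the form $2 + (n-1)\bigl(\frac{R_1^n}{r^n} - 1\bigr)^2 + (\text{nonnegative remainder})$, in direct analogy with the expression for $G'(r)$ in the proof of Lemma \ref{decreasing lemma}. Once both derivative sign computations are in place, the monotonicity conclusions follow immediately for all $r \geq R_1$, which completes the proof.
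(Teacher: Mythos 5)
Your proposal is correct and takes essentially the same route as the paper: substitute $\tilde f_1(r)=r+\frac{R_1^n}{(n-1)r^{n-1}}$, $\tilde f_1'(r)=1-\frac{R_1^n}{r^n}$, obtain $\tilde F(r)=n+\frac{n}{n-1}\frac{R_1^{2n}}{r^{2n}}$ and $\tilde G(r)=(n+1)r+\frac{2R_1^n}{(n-1)r^{n-1}}-\frac{R_1^{2n}}{(n-1)r^{2n-1}}$, and read off the signs of the derivatives. The only cosmetic difference is in certifying $\tilde G'(r)\ge 0$: the paper uses the bound $\tilde G'(r)\ge \frac{(r^n-R_1^n)^2}{r^{2n}}$, whereas your proposed decomposition $2+(n-1)\bigl(\frac{R_1^n}{r^n}-1\bigr)^2+(\text{remainder})$ also works but requires the extra (easy) observation that $R_1^n/r^n\le 1$ to see that the remainder is nonnegative when $n\ge 4$.
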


\begin{proof}
    We have $\tilde{f}_1(r)=r+\frac{R_1^n}{(n-1)r^{n-1}}$ and $\tilde{f}_1'(r)=1-\frac{R_1^n}{r^n}.$ Substituting these values, we get 
\begin{align*}
   \tilde{F}(r)&=n+\frac{n}{n-1}\frac{R_1^{2n}}{r^{2n}} \,\, \,\quad \text{and}\\
   \tilde{G}(r)&=(n+1)r+\frac{2R_1^n}{(n-1)r^{n-1}}-\frac{R_1^{2n}}{(n-1)r^{2n-1}}.
\end{align*} 
Then by differentiating these functions, we have for all $r\in(R_1, \infty)$
\begin{align*}
    \tilde{F}'(r)&=-\frac{2n^2}{n-1}\frac{R_1^{2n}}{r^{2n+1}}\leq 0, \quad \text{and}\\
    \tilde{G}'(r)&=(n+1)- \frac{2R_1^n}{r^n}+\frac{2n-1}{n-1}\frac{R_1^{2n}}{r^{2n}}
   \geq \frac{(r^n-R_1^n)^2}{r^{2n}}\geq 0.
\end{align*}
Hence the conclusion follows.
\end{proof}

\subsection{Mixed Steklov Neumann problem on symmetric doubly connected domain}
In this subsection, we prove a sharp upper bound of the first $n$ nonzero mixed Steklov Neumann eigenvalues on $\tilde{\Omega}=\Omega_{out}\backslash \bar{B}_{R_1}.$ Let $\tilde{\Omega}_0 = B_{R_2} \backslash \Bar{B}_{R_1}$, where $B_{R_2}$ is a ball in $\mathbb{R}^{n}$ of radius $R_2$ centered at the origin. We choose $R_2 > R_1$ such that $Vol(B_{R_2})= Vol(\Omega_{out})$ i.e., $Vol(\Omega)= Vol(\tilde{\Omega}_0)$.

It is well known that for each $k\in \mathbb{N}, \mu_k(\tilde{\Omega})$ admits the following variational characterization
\begin{align}  \label{variational characterization (SN)}
\mu_{k}(\tilde{\Omega})=\min_{E\in \mathcal{H}_{k+1}(\Omega)} \max_{u(\neq 0) \in E} \tilde{R}(u),
\end{align} 
where $\mathcal{H}_{k+1}(\tilde{\Omega})$ is the collection of all $(k+1)$ dimensional subspace of the sobolev space $H^1(\tilde{\Omega})$ and $\tilde{R}(u)= \frac{\displaystyle \int_{\tilde{\Omega}} \| \nabla u \|^2 dV}{\displaystyle \int_{\partial{\Omega_{out}}} \|u\|^2 dS}$.

\begin{lemma} \label{SN integral 1}
    
          Let $\tilde{F}$ be a decreasing radial function defined as in Lemma \ref{SN de(increasing)lemma} and $\tilde{\Omega}, \tilde{\Omega_0}$ be defined as above. Then the following inequality holds.
    \begin{equation*}
         \int_{\tilde{\Omega}} \tilde{F}(r) dV \leq \int_{\tilde{\Omega}_0} \tilde{F}(r) dV.
    \end{equation*}    
\end{lemma}
 For proof, see \cite{basak2023sharp}.


\begin{lemma} \label{SN integral 2}
     Let $\tilde{f}_{1}(r), {\Omega}_{out} $ and $B_{R_2}$ be defined as above, and $\partial {\Omega}_{out}, \partial B_{R_2}$ are boundaries of $\Omega_{out}, B_{R_2},$ respectively. Then the following inequality holds.
   \begin{equation} \label{inequality 5}
      \int_{\partial {\Omega_{out}}} \tilde{f}_{1}^2(r) dS \geq  \int_{\partial {B_{R_2}}} \tilde{f}_{1}^2(r) dS.
   \end{equation}
\end{lemma}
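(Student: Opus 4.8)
The plan is to mirror the structure of the proof of Lemma~\ref{integral6}, which established the analogous inequality for the pure Steklov eigenfunction $f_{1,1}$. The target inequality compares a boundary integral of $\tilde{f}_1^2$ over $\partial\Omega_{out}$ with the same integral over $\partial B_{R_2}$, where the two domains $\tilde{\Omega}=\Omega_{out}\setminus\bar{B}_{R_1}$ and $\tilde{\Omega}_0=B_{R_2}\setminus\bar{B}_{R_1}$ share the inner boundary and have equal volume. The key structural tool is Lemma~\ref{SN de(increasing)lemma}, which tells us that $\tilde{G}(r)=2\tilde{f}_1(r)\tilde{f}_1'(r)+\frac{n-1}{r}\tilde{f}_1^2(r)$ is an increasing function of $r$. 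This monotonicity is exactly what drives the comparison, playing the same role that the monotonicity of $G$ played in Lemma~\ref{integral6}.

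First I would write $\Omega_{out}$ in polar form as $B=\{R_u u : u\in S^{n-1}\}$ with $R_u=\sup\{r : ru\in\partial\Omega_{out}\}$, and bound the surface integral over $\partial\Omega_{out}$ from below by dropping the $\sec(\theta)\geq 1$ factor, obtaining
\begin{align*}
\int_{\partial\Omega_{out}}\tilde{f}_1^2(r)\,dS \geq \int_{u\in S^{n-1}}\tilde{f}_1^2(R_u)R_u^{n-1}\,du.
\end{align*}
Next I would apply the fundamental theorem of calculus in the radial variable to rewrite $\tilde{f}_1^2(R_u)R_u^{n-1}$ as $\tilde{f}_1^2(R_1)R_1^{n-1}$ plus $\int_{R_1}^{R_u}\frac{d}{dr}\big(\tilde{f}_1^2(r)r^{n-1}\big)\,dr$, recognizing the integrand as $\tilde{G}(r)r^{n-1}$. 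Converting the angular-radial double integral back to a volume integral over $\tilde{\Omega}$ yields
\begin{align*}
\int_{\partial\Omega_{out}}\tilde{f}_1^2(r)\,dS \geq \int_{\tilde{\Omega}}\tilde{G}(r)\,dV + \tilde{f}_1^2(R_1)|S^{n-1}|R_1^{n-1}.
\end{align*}

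Then I would use the increasing property of $\tilde{G}$ exactly as in Lemma~\ref{integral6}: since $\tilde{G}(r)<\tilde{G}(R_2)$ on $\tilde{\Omega}_0\setminus(\tilde{\Omega}\cap\tilde{\Omega}_0)$ and $\tilde{G}(r)>\tilde{G}(R_2)$ on $\tilde{\Omega}\setminus(\tilde{\Omega}\cap\tilde{\Omega}_0)$, splitting the integral and invoking the equal-volume condition $\mathrm{Vol}(\tilde{\Omega}_0\setminus(\tilde{\Omega}\cap\tilde{\Omega}_0))=\mathrm{Vol}(\tilde{\Omega}\setminus(\tilde{\Omega}\cap\tilde{\Omega}_0))$ gives $\int_{\tilde{\Omega}}\tilde{G}\,dV\geq\int_{\tilde{\Omega}_0}\tilde{G}\,dV$. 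Finally, reversing the fundamental-theorem computation on the concentric annulus $\tilde{\Omega}_0$ collapses $\int_{\tilde{\Omega}_0}\tilde{G}(r)\,dV + \tilde{f}_1^2(R_1)|S^{n-1}|R_1^{n-1}$ back to $\int_{\partial B_{R_2}}\tilde{f}_1^2(r)\,dS$, completing the proof. I do not expect any genuine obstacle here, since the argument is a direct transcription of Lemma~\ref{integral6} with $f_{1,1}$, $B_1$, $B_L$ replaced by $\tilde{f}_1$, $B_{R_1}$, $B_{R_2}$; the only point requiring a moment's care is the boundary term at $r=R_1$, where the Neumann condition $\tilde{f}_1'(R_1)=0$ guarantees the constant inner-boundary contribution matches on both sides and does not interfere with the monotonicity comparison.
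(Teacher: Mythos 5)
Your proposal is correct and is essentially identical to the paper's intended argument: the paper gives no separate proof of this lemma, stating only that it follows exactly as Lemma~\ref{integral6}, which is precisely the transcription you carry out (with $f_{1,1}$, $B_1$, $B_L$ replaced by $\tilde{f}_1$, $B_{R_1}$, $B_{R_2}$). The only tiny quibble is that your closing remark about the Neumann condition is unnecessary: the inner-boundary constant $\tilde{f}_1^2(R_1)R_1^{n-1}\lvert S^{n-1}\rvert$ matches on both sides simply because both domains share the inner boundary $\partial B_{R_1}$, independently of $\tilde{f}_1'(R_1)=0$.
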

The proof will follow the same as the proof of Lemma \ref{integral6}.
 
Now we state the main result related to mixed Steklov Neumann eigenvalues.
\begin{theorem} \label{thm:isoperimetricSN}
    Let $\tilde{\Omega}$ and $\tilde{\Omega}_0$ be defined as the above, and $\mu_k$ is the $k$th eigenvalue of (\ref{SNproblem1}) on $\tilde{\Omega}.$ Then for each $1\leq k \leq n,$
    \begin{equation}
        \mu_k(\tilde{\Omega}) \leq \mu_k(\tilde{\Omega}_0)=\mu_1(\tilde{\Omega}_0).
    \end{equation}
\end{theorem}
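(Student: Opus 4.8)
The plan is to mirror exactly the proof of Theorem \ref{thm:isoperimetric}, since the mixed Steklov Neumann problem on $\tilde{\Omega}$ has the same structure: the Rayleigh quotient $\tilde{R}(u)$ integrates $\|\nabla u\|^2$ over $\tilde{\Omega}$ against $\int_{\partial\Omega_{out}} u^2\,dS$, and the only structural difference from the pure Steklov case is that the inner sphere $\partial B_{R_1}$ carries a Neumann (natural) boundary condition rather than contributing to the boundary integral. Crucially, the test functions I will use, built from $\tilde{f}_1(r)$, automatically satisfy $\tilde{f}_1'(R_1)=0$ by construction of \eqref{Neuman eigenfunction}, so they are legitimate competitors and the inner boundary simply does not appear in the denominator. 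As in Theorem \ref{thm:isoperimetric}, since $\mu_k(\tilde{\Omega}_0)=\mu_1(\tilde{\Omega}_0)$ for $1\le k\le n$, it suffices to prove $\mu_n(\tilde{\Omega})\le\mu_1(\tilde{\Omega}_0)$.

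First I would introduce the $(n+1)$-dimensional trial space
\[
E=\mathrm{span}\Big\{\tilde{f}_1(r),\ \tfrac{\tilde{f}_1(r)}{r}x_1,\ \dots,\ \tfrac{\tilde{f}_1(r)}{r}x_n\Big\}\subset H^1(\tilde{\Omega}),
\]
and for an arbitrary $u=c_0\tilde{f}_1(r)+\sum_{i=1}^n c_i\,\tfrac{\tilde{f}_1(r)}{r}x_i$ expand the Rayleigh quotient. The symmetry-of-order-$4$ orthogonality relations of Corollary \ref{cor: f_{1,1}} (which hold verbatim with $\tilde{f}_1$ in place of $f_{1,1}$, since their proofs in \cite{basak2023sharp} only use that the radial factor is a positive radial function and that $\Omega_{out}$ is $4$-symmetric) kill all cross terms, so both numerator and denominator become diagonal sums over $c_0,\dots,c_n$. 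Lemma \ref{lem:integral1} then forces the $n$ coefficients $A_1=\int_{\partial\Omega_{out}}(\tfrac{\tilde{f}_1}{r}x_i)^2\,dS$ and $A_2=\int_{\tilde{\Omega}}\|\nabla(\tfrac{\tilde{f}_1}{r}x_i)\|^2\,dV$ to be independent of $i$, yielding $nA_1=\int_{\partial\Omega_{out}}\tilde{f}_1^2\,dS$ and $nA_2=\int_{\tilde{\Omega}}\tilde{F}(r)\,dV$.

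Next, the quotient is bounded by $\max\{\int_{\tilde{\Omega}}\|\nabla\tilde{f}_1\|^2\,dV/\int_{\partial\Omega_{out}}\tilde{f}_1^2\,dS,\ A_2/A_1\}$, and since $A_2/A_1=\int_{\tilde{\Omega}}\tilde{F}(r)\,dV/\int_{\partial\Omega_{out}}\tilde{f}_1^2\,dS$ dominates the first term (the numerator of $A_2/A_1$ carries the extra nonnegative $\tfrac{n-1}{r^2}\tilde{f}_1^2$ term), I obtain $\tilde{R}(u)\le A_2/A_1$ uniformly over $E$. The two geometric inequalities then finish the argument: Lemma \ref{SN integral 1} gives $\int_{\tilde{\Omega}}\tilde{F}(r)\,dV\le\int_{\tilde{\Omega}_0}\tilde{F}(r)\,dV$, and Lemma \ref{SN integral 2} (the analogue of Corollary \ref{cor: integral4}, obtained from Lemma \ref{SN de(increasing)lemma} exactly as Corollary \ref{cor: integral4} follows from the monotonicity of $G$) gives $\int_{\partial\Omega_{out}}\tilde{f}_1^2\,dS\ge\int_{\partial B_{R_2}}\tilde{f}_1^2\,dS$. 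Combining,
\[
A_2/A_1\le\frac{\int_{\tilde{\Omega}_0}\tilde{F}(r)\,dV}{\int_{\partial B_{R_2}}\tilde{f}_1^2\,dS}=\mu_1(\tilde{\Omega}_0),
\]
and feeding this into the variational characterization \eqref{variational characterization (SN)} yields $\mu_n(\tilde{\Omega})\le\max_{u\in E}\tilde{R}(u)\le\mu_1(\tilde{\Omega}_0)$.

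I expect the main subtlety to be purely verificational rather than conceptual: one must check that Corollary \ref{cor: f_{1,1}} and the boundary inequality Lemma \ref{SN integral 2} truly transfer from $f_{1,1}$ to $\tilde{f}_1$. The decreasing/increasing monotonicity needed for the boundary estimate is already supplied by Lemma \ref{SN de(increasing)lemma}, and the orthogonality relations depend only on parity under the $4$-symmetry, so both transfers are clean; the only genuine point of care is that the Neumann condition $\tilde{f}_1'(R_1)=0$ makes the test functions admissible and removes the inner boundary from the denominator, which is precisely what aligns the mixed problem with the pure Steklov computation.
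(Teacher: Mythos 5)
Your proposal is correct and follows essentially the same route as the paper's own proof: the same trial space built from $\tilde{f}_1$, the same use of the order-$4$ symmetry orthogonality relations and Lemma \ref{lem:integral1} to diagonalize the Rayleigh quotient, and the same final appeal to Lemmas \ref{SN integral 1} and \ref{SN integral 2} together with the variational characterization \eqref{variational characterization (SN)}. Your remark that the Neumann condition on $\partial B_{R_1}$ is natural (so no admissibility constraint is needed there) is a correct and worthwhile clarification that the paper leaves implicit.
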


\begin{proof}
 Since $\mu_k(\tilde{\Omega}_0)=\mu_1(\tilde{\Omega}_0), 1\leq k \leq n $, so it is enough to prove that $\mu_n(\tilde{\Omega}) \leq \mu_n(\tilde{\Omega}_0)=\mu_1(\tilde{\Omega}_0)$. To find bound for $\mu_n(\tilde{\Omega})$, we consider an $(n+1)$ dimensional subspace of $H^1(\tilde{\Omega})$ and use it in the variational characterization \eqref{variational characterization (SN)} of $\mu_n(\tilde{\Omega})$. Take
        \begin{align*}
            E = span \bigg\{\tilde{f}_{1}(r), \frac{\tilde{f}_{1}(r)}{r}x_1, \dots \frac{\tilde{f}_{1}(r)}{r}x_n \bigg\}, 
        \end{align*}
        where $\tilde{f}_{1}(r)$ is define in (\ref{Neuman eigenfunction}) with $l=1$. For any $u \in E \backslash \{0\}$, there exist $d_0, d_1, \dots d_n \in \mathbb{R}$ not simultaneously equal to zero, such that 
        \begin{align*}
            u= d_0 \tilde{f}_{1}(r) + d_1 \frac{\tilde{f}_{1}(r)}{r}x_1 + \dots + d_n\frac{\tilde{f}_{1}(r)}{r}x_n.
        \end{align*}
        Then by using proposition \ref{prop:integral expression}, we get 
        \begin{align}
            \frac{\displaystyle \int_{\tilde{\Omega}} \| \nabla u \|^2 dV}{\displaystyle \int_{\partial {\Omega_{out}}} u^2 dS}= \frac{d_0^2 \displaystyle \int_{\tilde{\Omega}} \|\nabla \tilde{f}_{1}(r)\|^2 \, dV+ \displaystyle \sum_{i=1}^nd_i^2 \displaystyle \int_{\tilde{\Omega}} \bigg \| \nabla \left( \frac{\tilde{f}_{1}(r)}{r}  x_i \right) \bigg \|^2 \, dV}{d_0^2 \displaystyle \int_{\partial {\Omega_{out}}}\tilde{f}_{1}^2(r) \, dS + \displaystyle \sum_{i=1}^n d_i^2 \displaystyle \int_{\partial {\Omega_{out}}} \frac{\tilde{f}_{1}^2(r)}{r^2}x_i^2 \, dS}. \label{equality}
        \end{align}

Then using Lemma \ref{lem:integral1} and the similar argument used in the proof of Theorem \ref{thm:isoperimetric}, we get
\begin{align}
       \frac{\displaystyle \int_{\tilde{\Omega}} \| \nabla u \|^2 dV}{\displaystyle \int_{\partial {\Omega_{out}}} u^2 \, dS} \leq  \frac{\displaystyle \int_{\tilde{\Omega}} \left( (\tilde{f}'_{1}(r))^2 + \frac{(n-1)}{r^2} \tilde{f}_{1}^2(r)\right) \, dV }{\displaystyle \int_{\partial {\Omega_{out}}} \tilde{f}_{1}^2(r) \, dS}. \label{gradient inequality}
    \end{align}
Next, using Lemma \ref{SN integral 1} and Lemma \ref{SN integral 2} we get

 \begin{align*}
       \frac{ \displaystyle \int_{\tilde{\Omega}} \left( (\tilde{f}_{1}'(r))^2 + \frac{(n-1)}{r^2} \tilde{f}_{1}^2(r)\right) \, dV }{\displaystyle \int_{\partial {\Omega_{out}}} \tilde{f}_{1}^2(r) \, dS} \leq \frac{\displaystyle \int_{\tilde{\Omega}_{0}} \left( (\tilde{f}_{1}'(r))^2 + \frac{(n-1)}{r^2} \tilde{f}_{1}^2(r)\right ) \, dV }{\displaystyle \int_{\partial B_{R_2}}\tilde{f}_{1}^2(r) \, dS} = \mu_1(\tilde{\Omega}_0).
\end{align*}
This proves the theorem.
\end{proof}

\begin{figure}
    \centering

    \begin{minipage}{0.4\textwidth}
        \centering
        \includegraphics[width=\textwidth]{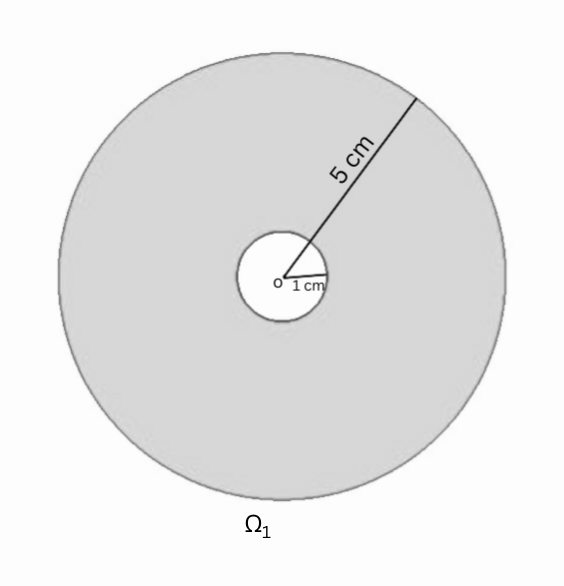} 
        \caption{Concentric Annular Domain}
         \label{fig:figure1}
    \end{minipage}%
   \hspace{0.5 cm}
    \begin{minipage}{0.5\textwidth}
        \centering
        \vspace{1cm} 
        \begin{tabular}{|c|c|c|c|}
            \hline
            Domain $(\Omega)$ &$ \Omega_1$& $\Omega_2$ &$\Omega_3 $\\ \hline
           $ \sigma_2 $   & $ 0.1783 $  &$ 0.2384$   & $ 0.20204$   \\ \hline
            $\mu_2$  & $0.18467 $  & $0.23222$   &$ 0.24484$   \\ \hline
            
        \end{tabular}
        \captionof{table}{Comparison of eigenvalues}
        \label{tab:mytable}
    \end{minipage}

    \begin{minipage}{0.4\textwidth}
        \centering
        \includegraphics[width=\textwidth]{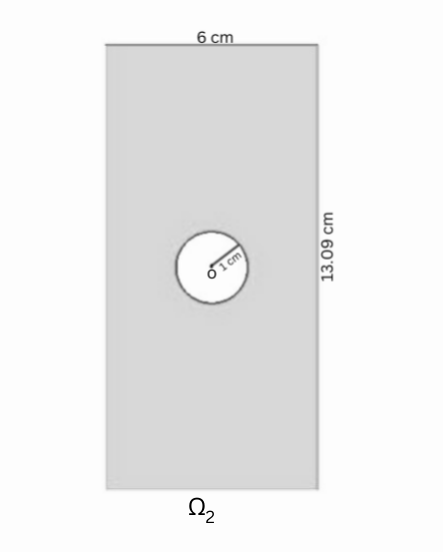}
        \caption{Rectangle with spherical hole} 
        \label{fig:rectangle-ball}
        \end{minipage}
    \hfill
    \begin{minipage}{0.4\textwidth}
        \centering
        \includegraphics[width=\textwidth, height = 8.5 cm ]{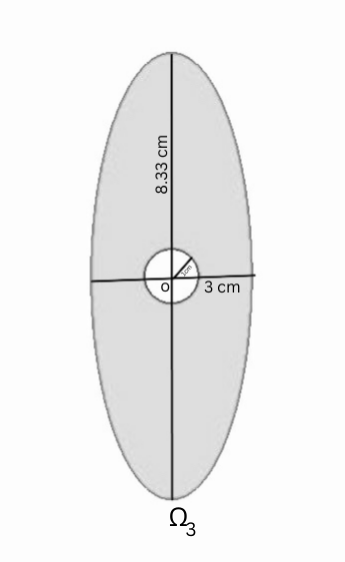}
      \caption{Ellipse with spherical hole}
        \label{fig:ellipse-ball}
        \end{minipage}
    

    
\end{figure}

\section{Counter examples} \label{Counter examples}
In this section, we provide some examples to emphasis the fact that assumption `symmetry of order $4$' on domains is crucial. In particular, we present some domains having symmetry of order $2$ (not having symmetry of order $4$) for which Theorem \ref{thm:isoperimetric} and \ref{thm:isoperimetricSN} fails. Consider concentric annular domain $\Omega_1$ (Figure \ref{fig:figure1}) with outer and inner radius of 5 cm and 1 cm, respectively.
\begin{enumerate}
    \item Take domain $\Omega_2$, rectangle (with sides $13.095$ cm and $6$ cm) having a spherical hole of radius 1, see Figure \ref{fig:rectangle-ball}. Then using FreeFEM++ \cite{hecht2012new}, we find that $\sigma_{2}(\Omega_2) > \sigma_{2}(\Omega_1)$ and $\mu_{2}(\Omega_2) > \mu_{2}(\Omega_1)$ (see Table \ref{tab:mytable}).
    \item Our next example is domain $\Omega_3$, an ellipse (with 8.33 cm major axis and 3 cm minor axis) having a spherical hole of radius 1 (Figure \ref{fig:ellipse-ball}). Then again using FreeFEM++ \cite{hecht2012new}, we obtain that $\sigma_{2}(\Omega_3) > \sigma_{2}(\Omega_1)$ and $\mu_{2}(\Omega_3) > \mu_{2}(\Omega_1)$ (see Table \ref{tab:mytable}).
\end{enumerate}
These examples show that symmetry condition in Theorems \ref{thm:isoperimetric} and \ref{thm:isoperimetricSN} can not be dropped.

\section{Numerical Observations} \label{sec:numerical observation}
Let $\Omega_{(t_1, t_2)} = \Omega_{out} \setminus B_1(t_1, t_2)$ be a doubly connected domain, where $B_1(t_1, t_2)$ is the ball of radius $1$ centered at $(t_1, t_2)$ and $B_1(t_1, t_2)$ is compactly contained in $\Omega_{out}$. In this section, we present numerical values of first and second nonzero eigenvalues of Steklov problem and mixed Steklov Neumann problem on following two different types of doubly connected domains:
\begin{enumerate}
    \item When outer domain is a ball and $B_1(t_1, t_2)$ varies inside the ball.
    \item When outer domain is an ellipse and $B_1(t_1, t_2)$ varies inside the ellipse.
\end{enumerate}
The eigenvalues have been calculated numerically using FreeFEM++ \cite{hecht2012new} and they are given below in tabular form. Based on these values, we make various observations and state them as conjectures.
\subsection{When outer domain is a ball:} We take outer domain to be ball $B_5$ in $\mathbb{R}^{2}$ of radius $5$ centered at the origin and remove a ball of radius $1$ from its interior. On this doubly connected domain, we have provided the first two nonzero Steklov eigenvalues and Steklov Neumann eigenvalues in Table \ref{tab: ball}. Based on the observations from Table \ref{tab: ball}, we state the following conjecture: 
\begin{conjecture}
If $\Omega_{out}$ is the ball $B_{R_2}$ in $\mathbb{R}^{2}$ of radius $R_2$ centered at the origin. Then
\begin{enumerate}
    \item The first and second nonzero Steklov eigenvalues, $\sigma_1(\Omega_{(t_1, t_2)})$ and $\sigma_2(\Omega_{(t_1, t_2)})$, decrease as distance between centers of both the ball increases. In particular, among all positions of $B_1(t_1, t_2)$ inside $B_{R_2}$, eigenvalues  $\sigma_1(\Omega_{(t_1, t_2)})$ and $\sigma_2(\Omega_{(t_1, t_2)})$ attain maximum when both balls are concentric and these eigenvalues attain minimum when the inner ball touches the outer ball.

    \item  The first nonzero mixed Steklov Neumann eigenvalue $\mu_1(\Omega_{(t_1, t_2)})$ (with Steklov condition on the outer boundary and Neumann condition on the inner boundary) has multiplicity $2$ and it is decreasing with respect to the distance between centers of both the balls.
\end{enumerate}
\end{conjecture}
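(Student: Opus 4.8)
The plan is to reduce both assertions to the study of a one-parameter family $\Omega_t := B_{R_2}\setminus\overline{B_1((t,0))}$, $t\in[0,R_2-1)$, which is legitimate because $\Omega_{out}=B_{R_2}$ is rotationally symmetric, so $\sigma_i$ and $\mu_i$ depend only on the distance $|(t_1,t_2)|$. Each $\Omega_t$ is symmetric under reflection across the $x$-axis, and I would use this to split every eigenspace into modes even and odd in the $y$-variable. Note that $\Omega_t$ and $\Omega_{-t}$ are congruent, so $t\mapsto\sigma_i(\Omega_t)$ and $t\mapsto\mu_i(\Omega_t)$ are even; in particular $t=0$ is automatically a critical point of each branch. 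The maximality at $t=0$ for $\sigma_1$ is already established in \cite{ftouhi2022place}, whereas the corresponding statement for $\sigma_2$ is genuinely new, since for $t\neq 0$ the domain $\Omega_t$ fails to be symmetric of order $4$ and Theorem \ref{thm:isoperimetric} does not apply.

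For item (1) the first step is to differentiate the eigenvalue along the family via the Hadamard formula. Translating the inner disk by $e_1$ produces a velocity field whose normal component is $e_1\cdot\nu$ on $\partial B_1((t,0))$ and vanishes on $\partial B_{R_2}$; for a simple Steklov eigenvalue $\sigma$ with eigenfunction $u$ normalized by $\int_{\partial\Omega_t}u^2\,dS=1$ one obtains $\dot\sigma=\int_{\partial B_1((t,0))}\bigl(|\nabla_\tau u|^2-\sigma^2 u^2-\sigma\kappa\,u^2\bigr)(e_1\cdot\nu)\,dS$, with $\nabla_\tau$ the tangential gradient and $\kappa$ the curvature of the inner circle. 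The task is to show this boundary integral is negative along the lowest branch for $t>0$. Because $\sigma_1$ has multiplicity two at $t=0$, the eigenvalue is not differentiable there in the naive sense, so I would treat $t=0$ through the evenness observation above together with Ftouhi's maximality, and then establish strict monotonicity on $(0,R_2-1)$ from the sign of the integral.

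A cleaner route in the plane is conformal. The region between two nested circles is conformally equivalent to a round annulus $A_\rho=\{\rho<|z|<1\}$, whose modulus $\rho=\rho(t)$ increases with $t$ (degenerating as the inner disk approaches $\partial B_{R_2}$, and minimal at the concentric configuration). Under a conformal map $\Phi$, harmonic functions remain harmonic and the Dirichlet energy is preserved, while the boundary term acquires the positive weight $w=|(\Phi^{-1})'|$; thus the Steklov problem on $\Omega_t$ becomes a weighted Steklov problem on the fixed annulus $A_\rho$, and the Neumann condition on the inner circle needed for item (2) is preserved by conformality. Monotonicity in $t$ is then recast as monotonicity of the weighted eigenvalues in the pair $(\rho,w)$, which one can hope to control using the explicit dependence of $\rho$ and $w$ on $t$.

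The genuinely hard point is the multiplicity claim in item (2): that $\mu_1(\Omega_t)$ retains multiplicity two for \emph{every} position of the hole. The reflection symmetry alone forces no degeneracy, since it merely separates even and odd modes; a persistent double eigenvalue would require an even and an odd eigenfunction to share the same eigenvalue at all $t$, a coincidence not explained by the visible symmetries. I expect this to be the main obstacle: one must either uncover a hidden structure — for instance a special evenness/oddness interplay of the weight $w$ under the conformal map to $A_\rho$ — that pins the two branches together, or else accept that the observed degeneracy is only approximate. Once the multiplicity structure is understood, the monotone decrease of $\mu_1$ should follow from the same Hadamard computation applied to the mixed problem, using $\tilde f_1$ and the monotonicity of $\tilde F,\tilde G$ from Lemma \ref{SN de(increasing)lemma} exactly as in the proof of Theorem \ref{thm:isoperimetricSN}.
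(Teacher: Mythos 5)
This statement is not proved in the paper at all: it appears in Section \ref{sec:numerical observation} as a \emph{conjecture}, supported only by the FreeFEM++ values in Table \ref{tab: ball}. So there is no ``paper proof'' to match, and your text should be judged as a standalone argument --- and as such it is a research plan, not a proof. You say so yourself at the key junctures: the sign of the Hadamard boundary integral $\int_{\partial B_1((t,0))}\bigl(|\nabla_\tau u|^2-\sigma^2u^2-\sigma\kappa u^2\bigr)(e_1\cdot\nu)\,dS$ is never determined (and this is exactly where the difficulty lives, since the integrand changes sign on the inner circle); the monotonicity of the weighted Steklov eigenvalues in the pair $(\rho,w)$ after the conformal reduction is something you ``hope to control''; and for item (2) you explicitly flag that you have no mechanism forcing the double eigenvalue to persist, only the observation that reflection symmetry alone does not explain it. Each of these is a genuine gap, not a routine verification.

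Two further cautions. First, for $\sigma_2$ the relevant eigenvalue on the concentric annulus is $\sigma_{2,1}$ with multiplicity $\tfrac{(n+2)(n-1)}{2}=2$ in the plane (Theorem \ref{thm:increasing}), so at $t=0$ the branch you want to follow is again degenerate and the naive Hadamard formula fails there too; evenness in $t$ gives you a critical point of each smooth branch but does not by itself rule out that an off-center position beats the concentric one (that is precisely what Section \ref{Counter examples} shows can happen once symmetry is lost in other families). Second, in the Hadamard formula for the mixed problem the contribution from the moving inner circle carries the Neumann data $\partial_\nu u=0$ rather than $\sigma u$, so the integrand is $|\nabla_\tau u|^2-\mu\kappa u^2$ only after rechecking which boundary condition holds on the perturbed component; you cannot transplant the pure-Steklov formula verbatim. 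In short: the outline is a reasonable program (and consistent with the numerics the authors report), but it does not establish the conjecture, and the paper itself offers no proof to compare it against.
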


\begin{table}[h]
    \centering   
\begin{tabular}
{|c|c|c|c|c|c|c|c|}
\hline
 $(t_1, t_2)$ & (0.5, 0)  & (1, 0)  &  (1.5, 0) & (2, 0) & (2.5, 0) & (3, 0) & (3.5, 0) \\
\hline
 $\sigma_1(\Omega_{(t_1, t_2)})$ &  0.177575 &  0.175421  & 0.171908  & 0.167088 & 0.160911  & 0.152997  & 0.141689 \\
\hline
$\sigma_2(\Omega_{(t_1, t_2)})$ &  0.178242 &  0.178052  & 0.177698  & 0.177101   & 0.176084  & 0.174166 & 0.169468 \\
\hline
$\mu_1(\Omega_{(t_1, t_2)})$ & 0.184583 & 0.18448  &  0.184289  & 0.183969  & 0.183431   & 0.182441   & 0.180127  \\
\hline
$\mu_2(\Omega_{(t_1, t_2)})$ & 0.184583 &  0.18448  &  0.184289   & 0.183969  & 0.183431  & 0.182441   & 0.180127 \\
\hline
\end{tabular}
\caption{\small{Eigenvalues when outer domain $\Omega_{out}$ is ball of radius 5 centered at the origin.}}
\label{tab: ball}
\end{table}

\subsection{When outer domain is an ellipse:} Now we take ellipse $E_{3,8.33}$ in $\mathbb{R}^{2}$ centered at the origin with minor axis $3$ cm and major axis $8.33$ cm as the outer domain. In Table \ref{tab: ellipse1}, \ref{tab: ellipse2} and \ref{tab: ellipse3}, we have given the first and second nonzero Steklov and Mixed Steklov Neumann eigenvalues for different positions of $B_1(t_1, t_2)$ inside $E_{3,8.33}$. Based on the observations from these tables, we state the following conjecture.

\begin{conjecture}
If $\Omega_{out}$ is the ellipse $E_{r,R}$ in $\mathbb{R}^{2}$ centered at the origin with minor axis $r$ and major axis $R$. Then
\begin{enumerate}
    \item $\sigma_1(\Omega_{(t_1, t_2)})$ and $\mu_2(\Omega_{(t_1, t_2)})$ are decreasing with respect to the distance between centers of $E_{r,R}$ and $B_1(t_1, t_2)$.
    \item $\mu_1(\Omega_{(t_1, t_2)})$ is decreasing with respect to the distance between centers of $E_{r,R}$ and $B_1(t_1, t_2)$ when center of $B_1(t_1, t_2)$ varies along lines $y=0$ and $y=x$. However, it is increasing when center of $B_1(t_1, t_2)$ varies along line $x=0$.
    
\end{enumerate}
\end{conjecture}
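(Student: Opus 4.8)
The statement is a monotonicity result under domain perturbation, so the natural tool is the Hadamard shape-derivative formula rather than the variational comparison of Theorems \ref{thm:isoperimetric} and \ref{thm:isoperimetricSN}: the domains $\Omega_{(t_1,t_2)}$ for distinct hole positions are not nested, so no direct test-function inclusion is available. The plan is to fix a unit direction $e$ and parametrize the hole by its center $c(t)=t\,e$, writing $\Omega_t=\Omega_{out}\setminus B_1(t\,e)$. Increasing $t$ is a rigid translation of the inner ball, so the only part of $\partial\Omega_t$ that moves is the inner sphere $\Gamma_t=\partial B_1(t\,e)$, with normal velocity $V_n=e\cdot\nu$ on $\Gamma_t$, where $\nu$ is the outward normal of $\Omega_t$ (pointing into the hole). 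The object to study is then the sign of $\frac{d}{dt}\sigma_1(\Omega_t)$, $\frac{d}{dt}\mu_1(\Omega_t)$ and $\frac{d}{dt}\mu_2(\Omega_t)$; decreasing means a negative derivative for $t>0$ along the relevant direction.

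First I would record the relevant Hadamard formulas. For a simple Steklov eigenvalue $\sigma$ with eigenfunction $u$ normalized by $\int_{\partial\Omega_t}u^2\,dS=1$, the shape derivative reads
\[
\frac{d\sigma}{dt}=\int_{\Gamma_t}(e\cdot\nu)\Big(|\nabla_\tau u|^2-\sigma\,\kappa\,u^2-\sigma^2u^2\Big)\,dS,
\]
with $\nabla_\tau$ the tangential gradient on $\Gamma_t$ and $\kappa$ its curvature. For the mixed problem \eqref{SNproblem1} the moving boundary $\Gamma_t$ carries the Neumann condition and does not carry the spectral parameter, so the density collapses (using $\partial_\nu u=0$ on $\Gamma_t$) to essentially $|\nabla_\tau u|^2$, giving $\frac{d\mu}{dt}=\int_{\Gamma_t}(e\cdot\nu)\,|\nabla_\tau u|^2\,dS$ up to lower-order curvature terms. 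A preliminary difficulty is that at the concentric position $t=0$ both $\sigma_1$ and $\mu_1$ have multiplicity $n=2$, so the eigenvalue branches are only one-sided differentiable and $\frac{d}{dt}$ must be replaced by the eigenvalues of the $2\times2$ matrix $\big(\int_{\Gamma_0}(e\cdot\nu)\,[\,\cdots\,]\,u_iu_j\,dS\big)_{i,j}$ formed from an orthonormal basis $\{u_1,u_2\}$ of the eigenspace; for $t>0$ the axial symmetry is broken and one expects the branch to become simple, so the scalar formulas apply.

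The heart of the matter, and the step I expect to be the main obstacle, is the sign analysis. On the ellipse-minus-ball domain there are no explicit eigenfunctions, so the sign of the Hadamard density cannot be read off directly. The route I would pursue exploits the residual reflection symmetry: when $e$ lies along an axis of $E_{r,R}$ (the cases $y=0$ and $x=0$), $\Omega_t$ stays invariant under reflection across that axis, which forces each eigenfunction to be even or odd under the reflection and constrains its nodal set on $\Gamma_t$. I would then attempt to show that, for the relevant branch, the weighted integral of the density against $e\cdot\nu$ is negative for $\sigma_1$ and $\mu_2$ --- heuristically, sliding the hole toward the boundary lets the harmonic extension relax and lowers the Rayleigh quotient in \eqref{variational characterization}. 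Making this rigorous needs quantitative control of $u$ on $\Gamma_t$, comparing its tangential gradient with its boundary values; for a non-radial domain the only structural input available is the monotonicity of the radial densities from Lemma \ref{decreasing lemma} (respectively Lemma \ref{SN de(increasing)lemma}), which one would use to compare $u$ with the concentric eigenfunction $\tfrac{f_{1,1}(r)}{r}x_i$, supplemented by a maximum-principle or moving-plane argument to locate the extrema of $u$ on $\Gamma_t$.

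The genuinely delicate point is part (2): $\frac{d\mu_1}{dt}$ reverses sign between the major axis ($y=0$, decreasing) and the minor axis ($x=0$, increasing). Since $\frac{d\mu}{dt}=\int_{\Gamma_t}(e\cdot\nu)\,|\nabla_\tau u|^2\,dS$ and the factor $e\cdot\nu$ already changes sign around $\Gamma_t$, the outcome is governed entirely by where $|\nabla_\tau u|^2$ is largest relative to the direction $e$ --- that is, by whether the first mixed eigenfunction concentrates along the elongated direction of the ellipse or transverse to it. No direction-independent monotonicity of the radial densities can capture this reversal; the proof must track the anisotropy of $u$. Because that eigenfunction is not available in closed form, I expect a complete proof to require either a perturbation expansion in the eccentricity of $E_{r,R}$ (viewing the ellipse as a perturbed disk and computing the first correction to $u$ and to the density) or sharp elliptic estimates for $u$ near $\Gamma_t$. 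This is exactly the gap that leaves the statement at the level of a conjecture, consistent with the FreeFEM++ data in Tables \ref{tab: ellipse1}--\ref{tab: ellipse3}.
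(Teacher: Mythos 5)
The first thing to say is that the paper contains no proof of this statement: it is explicitly labelled a conjecture and is supported only by FreeFEM++ computations (Tables \ref{tab: ellipse1}--\ref{tab: ellipse3}), so there is no argument of the authors' to compare yours against. Judged on its own terms, your proposal is a sensible research program --- Hadamard shape derivatives with the inner circle $\Gamma_t$ as the only moving boundary is indeed the natural framework, and the formulas you quote are essentially correct --- but it is not a proof, and you say as much: the decisive step, determining the sign of $\int_{\Gamma_t}(e\cdot\nu)\bigl(\,\cdots\,\bigr)\,dS$ for a non-explicit eigenfunction, is left as something you ``would attempt to show.'' Nothing in the paper's toolkit (the radial monotonicity of Lemmas \ref{decreasing lemma} and \ref{SN de(increasing)lemma}, the volume-rearrangement Lemmas \ref{lem:integral2} and \ref{integral6}) yields the required pointwise control of $u$ on $\Gamma_t$, and the sign reversal of $\mu_1$ between the two axes shows that no direction-blind estimate can succeed --- exactly the obstruction you identify yourself. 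So the core of the proposal is a genuine, acknowledged gap; what you have written is a plausible plan of attack, consistent with the statement's status as a conjecture, not a demonstration.

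Two concrete corrections to the plan itself. First, the multiplicity difficulty you raise at $t=0$ is misplaced for \emph{this} conjecture: the degeneracy of the first eigenvalue at the concentric position occurs when $\Omega_{out}$ is a ball (Table \ref{tab: ball}, where $\mu_1=\mu_2$); for the ellipse $E_{3,8.33}$ the tables show $\sigma_1<\sigma_2$ and $\mu_1<\mu_2$ along all three paths, so the scalar Hadamard formula should apply throughout and no one-sided-derivative or $2\times 2$ matrix machinery is needed --- though you would still have to prove that simplicity persists, e.g.\ via the reflection parities you invoke. Second, and more seriously, your symmetry-based constraint on the eigenfunction applies only when the translation direction $e$ is an axis of the ellipse; the conjecture also asserts monotonicity of $\sigma_1$, $\mu_1$ and $\mu_2$ along the line $y=x$ (Table \ref{tab: ellipse3}), where $\Omega_t$ has no reflection symmetry at all, so for that branch your argument has no structural input even at the heuristic level. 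Note finally that by evenness of the eigenvalues in $t$ along either axis, the derivative necessarily vanishes at $t=0$, so ``decreasing'' can only mean strict monotonicity for $t>0$; your one-sided reading of the conjecture is the right one.
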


\begin{table}[h]
    \centering
    
    \begin{tabular}
    {|c|c|c|c|c|c|}
    \hline
      $ (t_1, t_2)$ &(0.4, 0)  &(0.8, 0)   &(1.2, 0)   &(1.6, 0)   &(1.9, 0)   \\
    \hline
      $\sigma_1(\Omega_{(t_1, t_2)})$ &0.0671757 & 0.0670031 & 0.0666325 & 0.0657731 & 0.0636588  \\
    \hline
   $\sigma_2(\Omega_{(t_1, t_2)})$   & 0.200234  & 0.195403  & 0.186742 & 0.17198 & 0.1478 \\
    \hline
    $\mu_1(\Omega_{(t_1, t_2)})$   &  0.0682314 & 0.0680922 & 0.0677969 & 0.0671274 & 0.0655633  \\
    \hline
    $\mu_2(\Omega_{(t_1, t_2)})$    & 0.231544  & 0.230396  & 0.228218  & 0.22400   & 0.214635 \\
    \hline
   
    \end{tabular}
    \caption{\small{Eigenvalues when outer domain is $E_{3,8.33}$ and center of inner ball varies along $X-$axis}}
    \label{tab: ellipse1}
\end{table}

\begin{table}[h]
    \centering    
\begin{tabular}{|c|c|c|c|c|c|c|c|}
\hline
 $(t_1, t_2)$    &(0, 0.5) & (0, 1.5)& (0, 2.5) & (0, 3.5) & (0, 4.5)& (0, 5.5)& (0, 6.5)         \\
\hline
  $\sigma_1(\Omega_{(t_1, t_2)})$  & 0.0671408 &  0.0664638  & 0.0651789  & 0.0633908 & 0.0611841 & 0.0585319 & 0.0548775 \\
\hline
 $\sigma_2(\Omega_{(t_1, t_2)})$  & 0.202004 & 0.203392 & 0.204995 &  0.204736 & 0.200454  & 0.190421  &0.17005 \\
\hline
  $\mu_1(\Omega_{(t_1, t_2)})$  & 0.0682859 & 0.0683868 & 0.0685867 & 0.0688788 & 0.0692449 & 0.0696235 & 0.0697002 \\
\hline
 $\mu_2(\Omega_{(t_1, t_2)})$   & 0.231531 & 0.228691 & 0.223756  & 0.217687 & 0.211176 & 0.204242 & 0.19409    \\
\hline
\end{tabular}
\caption{\small{Eigenvalues when outer domain is $E_{3,8.33}$ and center of inner ball varies along $Y-$axis}}
\label{tab: ellipse2}
\end{table}


\begin{table}[h]
  \centering  
\begin{tabular}{|c|c|c|c|c|}
\hline
  $(t_1, t_2)$  &  (0.5, 0.5) &(1, 1)&(1.5, 1.5) & (1.9, 1.9)       \\
\hline
$\sigma_1(\Omega_{(t_1, t_2)})$ & 0.0670582 & 0.0664918&  0.0651753 & 0.0600965     \\
\hline
$\sigma_2(\Omega_{(t_1, t_2)})$ & 0.199557   & 0.192501 & 0.177751  & 0.128205       \\
\hline
  $\mu_1(\Omega_{(t_1, t_2)})$ & 0.0682192 & 0.0680132 & 0.0673899 & 0.0641569     \\
\hline
 $\mu_2(\Omega_{(t_1, t_2)})$  & 0.230959 & 0.227984  & 0.221902 & 0.198211   \\
\hline
\end{tabular}
\caption{\small {Eigenvalues when outer domain is $E_{3,8.33}$ and center of inner ball varies along line $y=x$.}}
\label{tab: ellipse3}
\end{table}

\newpage
\textbf{Acknowledgement:} S. Basak is supported by University Grants Commission, India. The corresponding author S. Verma acknowledges the project grant provided by SERB-SRG sanction order No. SRG/2022/002196.

    \bibliographystyle{plain}
\bibliography{main}

\end{document}